\documentclass[10pt,pdftex]{amsart}

\usepackage{multirow}

\usepackage{amsmath,array,rotating,color,hyperref,appendix,tabularx}
\setcounter{tocdepth}{1}

\usepackage[a4paper,margin=1.8cm]{geometry}

\usepackage{tocvsec2}
\usepackage{booktabs}
\usepackage{float}

\newcommand{\Mat}[1]{\mathrm{Mat}_{#1}}
\newcommand{\diagtext}{\mathrm{diag}}
\newcommand{\blocktext}{\mathrm{block}}
\newcommand{\diag}[2]{\diagtext_{#1,#2}}
\newcommand{\block}[2]{\blocktext_{#1,#2}}
\newcommand{\coniugioraw}{\mathrm{C}}
\newcommand{\coniugiobaseraw}{\mathrm{D}}
\newcommand{\coniugiobase}[1]{\coniugiobaseraw_{#1}}
\newcommand{\coniugio}[2]{\coniugiobaseraw_{#1,#2}}
\newcommand{\coniugiototale}[1]{\coniugioraw_{#1}}

\newcommand{\conjblock}[1]{\coniugiototale{#1}}

\newcommand{\Spin}[1]{\ensuremath{\text{\upshape\rmfamily Spin}(#1)}}
\newcommand{\Spinnoarg}{\ensuremath{\text{\upshape\rmfamily Spin}}}
\newcommand{\SO}[1]{\mathrm{SO}(#1)}
\newcommand{\ug}{\;\shortstack{{\tiny\upshape def}\\=}\;}

\newcommand{\mathematica}{\textit{Mathematica}}
\newcommand{\I}{\mathcal{I}} 
\newcommand{\J}{J} 

\newcommand{\LC}{L^\CC}
\newcommand{\RH}{R^\HH}
\newcommand{\LH}{L^\HH}
\newcommand{\RO}{R^\OO}
\newcommand{\LO}{L^\OO}
\newcommand{\LF}{{L}}

\newcommand{\liespin}[1]{\mathop{\mathfrak{spin}}(#1)}
\newcommand{\lieso}[1]{\mathop{\mathfrak{so}}(#1)}

\newcommand{\Id}{\mathop{\mathrm{Id}}}
\newcommand{\Idarg}[1]{\mathop{\mathrm{Id}_{#1}}}
\newcommand{\End}[1]{\mathrm{End}(#1)}
\newcommand{\Cl}[1]{\mathrm{Cl}(#1)}

\newcommand{\norm}[1]{\Vert #1\Vert}

 \newcommand{\CC}{\mathbb{C}}   
\newcommand{\HH}{\mathbb{H}}   
\newcommand{\RR}{\mathbb{R}}   
 
\newcommand{\NN}{\mathbb{N}}

\newcommand{\OO}{\mathbb{O}}

\renewcommand{\SS}{\mathbb{S}}

\newcommand{\CP}[1]{\mathbb{C}P^{#1}}
\newcommand{\HP}[1]{\mathbb{H}P^{#1}}
\newcommand{\II}{\mathrm{Im}}

\numberwithin{equation}{section}

\newtheorem{te}{Theorem}[section]
\newtheorem*{te*}{Theorem}
\newtheorem{pr}[te]{Proposition}
\newtheorem{co}[te]{Corollary}
\newtheorem{lm}[te]{Lemma}

\theoremstyle{definition}
\newtheorem{de}[te]{Definition}    

\theoremstyle{remark}
\newtheorem{re}[te]{Remark}

\begin{document}

\title[Spheres with more than 7 vector fields: all the fault of $\Spin{9}$]{Spheres with more than 7 vector fields: \\ all the fault of $\Spin{9}$} 

\subjclass[2010]{Primary 15B33, 53C27, 57R25}
\keywords{$\Spin{9}$, octonions, vector fields on spheres.}
\thanks{Both authors were supported by the MIUR under the PRIN Project ``Geometria Differenziale e Analisi Globale''}
\date{\today}

\author{Maurizio Parton}
\address{Universit\`a di Chieti-Pescara\\ Dipartimento di Scienze, viale Pindaro 87, I-65127 Pescara, Italy}
\email{parton@sci.unich.it}
\author{Paolo Piccinni}
\address{Dipartimento di Matematica\\ Sapienza - Universit\`a di Roma \\
Piazzale Aldo Moro 2, I-00185, Roma, Italy 
}
\email{piccinni@mat.uniroma1.it}

\begin{abstract}
We give an interpretation of the maximal number of linearly independent vector fields on spheres in terms of the $\Spin{9}$ representation on $\RR^{16}$. This casts an insight on the role of $\Spin{9}$  as a subgroup of $\SO{16}$ on the existence of vector fields on spheres, parallel to the one played by complex, quaternionic and octonionic structures on $\RR^2$, $\RR^4$ and $\RR^8$, respectively.
\end{abstract} 

\maketitle
\tableofcontents

\settocdepth{chapter}
\settocdepth{section}

\section{Introduction}

The existence of a nowhere zero vector field on odd dimensional spheres $S^{2n-1} \subset \RR^{2n}$ is an elementary consequence of the identification $\RR^{2n} = \CC^n$ and of the action  of the complex imaginary unit $i$ on the normal vector field $N$. Similarly, on spheres $S^{4n-1} \subset \RR^{4n}$ and $S^{8n-1} \subset \RR^{8n}$, one gets $3$ and $7$ tangent orthonormal vector fields from the identification $\RR^{4n} = \HH^n$ and $\RR^{8n} = \OO^n$. Here the $3$ imaginary units $i,j,k$ of quaternions $\HH$ and the $7$ imaginary units $i,j,k,e,f,g,h$ of octonions $\OO$ are used. These numbers $1,3,7$ give a maximal system of linearly independent vector fields on $S^{m-1} \subset \RR^m$, provided the (even) dimension $m$ of the ambient space is not divisible by $16$. 

The maximal number $\sigma(m)$ of linearly independent vector fields on $S^{m-1}$ is expressed as
\[
\sigma(m) = 2^p + 8q -1\enspace,
\]
where $\sigma(m) + 1 = 2^p + 8q$ is the \emph{Hurwitz-Radon number}, referring to the decomposition
\begin{equation}\label{eq:dec}
m = (2k+1)2^p 16^q\enspace, \qquad \text{where } 0 \leq p \leq 3\enspace.
\end{equation}
See~\cite{hur}, \cite {ra} for the original Hurwitz-Radon proof, obtained in the framework of compositions of quadratic forms. See also~\cite{ec} for a simplified proof, using representation theory of finite groups. Next,~\cite{ad1}, \cite{ad2} and~\cite{ad3} contain the J.~F.~Adams' celebrated theorem stating that $\sigma(m)$ is maximal. Also,~\cite{th} is an overview on related problems,~\cite[Chapters 11 and 15]{hu} and~\cite[Chapter V]{ka} are standard references.

The much more recent paper \cite{og} contains a combinatorial construction of a maximal system of orthonormal vector fields on spheres and an updated bibliography on the subject. In~\cite{og} a method of construction based on \emph{permutations of coordinates} is developed, generating tangent vector fields by acting on the normal vector through suitable \emph{monomial matrices}, that is, permutations and reflections of the coordinates. We will also proceed through permutations of coordinates and monomial matrices, although our main point is, as suggested in the title, to point out the role of the group $\Spin{9}$ in all the dimensions $m$ that allow more than $7$ linearly independent vector fields on $S^{m-1}$.

In Table~\ref{somespheres} we list some of the lowest dimensional spheres $S^{m-1} \subset \RR^m$ admitting a maximal number $\sigma(m)>7$ of linearly independent vector fields.

\begin{table}[H]
\caption{Some spheres $S^{m-1}$ with more than $7$ vector fields}\label{somespheres}
\begin{center}
\begin{tabular}{|c||c|c|c|c|c|c|c|c|c|c|c|c|c|c|c|c|c|}
\hline
$m-1$ & $15$ & $31$ & $47$ & $63$ & $79$ & $95$ & $111$ & $127$ & \dots & $255$ & \dots & $511$ & \dots & $1023$ & \dots & $2047$ & \dots\\
\hline 
\hline
$\sigma(m)$ & $8$ & $9$ & $8$ & $11$ & $8$ & $9$ & $8$ & $15$ & \dots & $16$ & \dots & $17$ & \dots & $19$ & \dots & $23$ & \dots\\
\hline
\end{tabular}
\end{center}
\end{table}

The first of them is $S^{15} \subset \RR^{16}$, that turns out to be a homogeneous space of the Lie group $\Spin{9}$. The unique Hopf fibration related to octonions can be written (cf.~\cite{gl-wa-zi}) in either of the two ways
\begin{equation}\label{hopffibration}
S^{15} \stackrel{S^7}{\longrightarrow} S^8\enspace, \qquad \frac{\Spin{9}}{\Spin{7}}\stackrel{\frac{\Spin{8}}{\Spin{7}}}{\longrightarrow}\frac{\Spin{9}}{\Spin{8}}\enspace.
\end{equation}

Indeed, a construction of $8$ orthonormal tangent vector fields on $S^{15} \subset \RR^{16}$ from the $\Spinnoarg$ representation of $\Spin{9}$ has been our starting point, although it was not completely clear for a while how such a construction extends to the next significative case, namely $S^{511} \subset \RR^{2\cdot 16^2}$. Accordingly, in writing the present paper we chose to postpone the proofs referring to arbitrary dimension after dealing with some ``low dimensional'' spheres, i.e.\ $S^{15}$ up to $S^{511}$. 
However, Section~\ref{sec:generalcase}, which contains the proof of the main statements for arbitrary dimension, is independent of the previous sections.

We have to mention that the framework we are going to use comes from Riemannian geometry in dimension $16$, that often refers to both the division algebra $\OO$ of octonions and the Lie group $\Spin{9}$. Just to give a couple of examples, we quote the study of $\Spin{9}$ as a weak holonomy group on Riemannian manifolds $M^{16}$~\cite{fr}, and the construction of exotic manifolds in the Cayley hyperbolic setting~\cite{af}.

Thus, we will consider $\Spin{9}$ as a subgroup of the rotation group $\SO{16}$, acting on $\RR^{16}=\OO^2$. It is generated by the block transformations
\begin{equation}\label{eq:haintro}
\left(
\begin{array}{c}
x \\
y
\end{array}
\right)
\longrightarrow
\left(
\begin{array}{cc}
r & \RO_{\overline u} \\
\RO_u & -r
\end{array}
\right)
\left(
\begin{array}{c}
x \\
y
\end{array}
\right)\enspace,
\end{equation}
where $(x,y) \in \OO^2$, $(r,u) \in S^8 \subset \RR \times \OO = \RR^9$ and $\RO_u, \RO_{\overline u}$ are the right multiplication on the octonions by $u, \overline u$, respectively (cf.\ Section~\ref{preliminaries} and~\cite[page 288]{ha}).

This approach focuses on the set of the nine self-dual involutions
\[
\I_{1},\dots,\I_{9}: \RR^{16} \longrightarrow \RR^{16}\enspace,
\]
defined by the nine choices $(r,u) = (1,0), (0,1), (0,i), \dots (0,h)$ in Formula~\eqref{eq:haintro}. These involutions satisfy the condition 
\[
\I_{\alpha} \I_{\beta} = -\I_{\beta}\I_{\alpha}\enspace, \qquad 1\leq \alpha < \beta \leq 9\enspace,
\]
(see~\cite[pages 287--289]{ha} and~\cite{fr}, \cite{pp}),
so that the $36$ compositions $\I_{\alpha} \I_{\beta}$ are complex structures on $\RR^{16}$.


We will see how the eight complex structures $\J_{1},\dots,\J_{8}$ on $\RR^{16}$ defined by
\[
\J_{\alpha}\ug\I_{\alpha}\I_{9}: \RR^{16} \longrightarrow \RR^{16}\enspace, \qquad \alpha =1,\dots,8
\]
play for our vector fields problem the same role as that of the action of the one complex, three quaternionic and seven octonionic units on $\RR^2$, $\RR^4$ and $\RR^8$, respectively. Indeed, we use these four kind of actions as fundamental ingredients in generating a set of vector fields on spheres of any dimension, see Theorems~\ref{teo:mainq}, \ref{teo:mainpq} and~\ref{teo:mainkpq}. To our knowledge, this role of \Spin{9} was never observed before.

Our results are briefly collected in Table~\ref{table:generalcase}, where $\conjblock{t}$ and $\coniugioraw$ are linear operators defined in Remark~\ref{re:overload}, and $\LF_i,\dots,\LF_h$ are left multiplications. It is worth remarking that, as in the permutation of coordinates method, $\conjblock{t}(\J_\cdot)$ and $\coniugioraw(\LF_\cdot)$ are monomial matrices.

\renewcommand{\arraystretch}{1.2}
\begin{table}[H]
\caption{A maximal system of vector fields on $S^{m-1}$}
{($m=(2k+1)2^p 16^q$, $k \geq 0$, $p=0,1,2,3$ and $q \geq 1$)} 
\label{table:generalcase}
\begin{center}
\begin{tabular}{|c||c|c|c|c|}
\hline
$(k,p,q)$ & Sphere & $\sigma(m)$ & Vector fields \\
\hline 
\hline
$(k,0,q)$ & $S^{(2k+1)16^q-1}$ & $8q$ & $\{\conjblock{t}(\J_{\alpha})\}_{\substack{t=1,\dots,q\\\alpha=1,\dots,8}}$ \\
\hline
$(k,1,q)$ & $S^{2(2k+1)16^q-1}$ & $8q+1$ & $\{\conjblock{t}(\J_{\alpha})\}_{\substack{t=1,\dots,q\\\alpha=1,\dots,8}}$ \\
& & & $\coniugioraw(\LF_i)$ \\
\hline
$(k,2,q)$ & $S^{4(2k+1)16^q -1}$ & $8q+3$ & $\{\conjblock{t}(\J_{\alpha})\}_{\substack{t=1,\dots,q\\\alpha=1,\dots,8}}$ \\
& & & $\coniugioraw(\LF_i),\coniugioraw(\LF_j),\coniugioraw(\LF_k)$ \\
\hline
$(k,3,q)$ & $S^{8(2k+1)16^q -1}$ & $8q+7$ &$\{\conjblock{t}(\J_{\alpha})\}_{\substack{t=1,\dots,q\\\alpha=1,\dots,8}}$ \\
& & & $\coniugioraw(\LF_i),\dots,\coniugioraw(\LF_h)$ \\
\hline
\end{tabular}
\end{center}
{For simplicity, we write $\conjblock{t}(\J)$ for $\conjblock{t}(\J)N$ and $\coniugioraw(\LF)$ for $\coniugioraw(\LF)N$, where $N$ is a normal unit vector field on $S^{m-1}$}
\end{table}
\renewcommand{\arraystretch}{1}

The computations for $S^{255}$ and up to $S^{8191}$ were first made with the help of the software \mathematica, which was the heuristic tool to formulate the correct form of the conjectures that became Theorems~\ref{teo:mainq}, \ref{teo:mainpq} and \ref{teo:mainkpq}.

Note that in our construction the sphere $S^{15}\subset\RR^{16}$ plays a basic role. Indeed, $S^{15}$ is the lowest dimensional sphere which admits more than $7$ tangent orthonormal vector fields. Also, $S^{15}$ is the total space of the three Hopf fibrations 
\[
S^{15} \stackrel{S^1}{\longrightarrow} \CP{7}\enspace,\qquad S^{15} \stackrel{S^3}{\longrightarrow}\HP{3}\enspace,\qquad S^{15} \stackrel{S^7}{\longrightarrow} S^8\enspace.
\]
Recall that the first two of them are not subfibrations of the third \cite{lv}. However, by writing down the vector fields tangent to the fibers in the three cases, one sees that no combination of them allows to get the maximal number $8$ of the orthonormal tangent vector fields on $S^{15}$. Table~\ref{table:generalcase} shows how the responsibility of such a maximal system on $S^{15}$ can be ascribed to the $\Spin{9}$ structure of $\RR^{16}$, and more generally it shows also how the same Lie group $\Spin{9}$ produces $q-1$ further $8$-ples of orthonormal vector fields when the dimension of the ambient space contains a factor $16^q$.

On the other hand, one can observe that the space of complex structures on $\RR^{16}$ splits, under the $\Spin{9}$ action:
\[
\Lambda^2(\RR^{16})=\Lambda^2_{36} \oplus \Lambda^2_{84} = \liespin{9} \oplus \Lambda^2_{84}\enspace,
\]  
and that our use in Table~\ref{table:generalcase} of the particular complex structures $\J_{1},\dots, \J_{8}$ on $\RR^{16}$ is just a possible choice, among the many ones, by suitable selections of $8$ complex structures in the component $\liespin{9} \subset \Lambda^2(\RR^{16})$, the Lie algebra of $\Spin{9}$.


In Section~\ref{preliminaries} we introduce specific notations to deal with low-dimensional cases. In Section~\ref{sec:15} we explain the $S^{15}$ situation as starting point for higher dimensions. In Section~\ref{sec:31-255} we show how  $S^{31}$, $S^{63}$, $S^{127}$ and $S^{255}$ can be seen in this respect as a combination of what obtained on $S^{15}$ and of the standard actions of $\CC,\HH,\OO$. Section~\ref{sec:511} introduces an iterative construction associated with the decomposition~\ref{eq:dec} of the dimension of the sphere in the most elementary case, that is, $S^{511}$. In Section~\ref{sec:generalcase} we introduce the general notation, then we state and prove our main statements, Theorems~\ref{teo:mainq}, \ref{teo:mainpq} and \ref{teo:mainkpq}.

\emph{Acknowledgements.} The authors wish to thank Rosa Gini for her help in developing the argument in Section~\ref{sec:generalcase}, and the referee for the careful reading of a first draft and for useful comments that lead us to revised proofs of the main statements.

\section{Preliminaries}\label{preliminaries}

We briefly recall the Cayley-Dickson process, used to construct new algebras from old ones. Let $\mathcal A$ be a \emph{$*$-algebra}, namely a real algebra equipped with a linear map $*:\mathcal A \rightarrow \mathcal A$, called \emph{conjugation}, satisfying
\[
a^{**}=a\enspace, \qquad (ab)^* = b^*a^*
\]
for all $a,b \in \mathcal A$. Then a new \emph{$*$-algebra} $\mathcal A'$ is defined by
\[
\mathcal A' \ug \{(a,b); a,b \in \mathcal A\}\enspace, \qquad (a,b)(c,d)\ug(ac-d^*b,da+bc^*)\quad\text{and}\quad
(a,b)^* \ug (a^*,-b)\enspace.
\]
This construction produces the algebra $\CC$ from $\RR$ (the linear map $*$ on the latter being the identity), then $\HH$ from $\CC$ and $\OO$ from $\HH$. We choose the standard canonical bases $\{1,i\}$, $\{1,i,j,k\}$ and $\{1,i,j,k,e,f,g,h\}$ for $\CC$, $\HH$ and $\OO$ respectively. In particular, we use the following multiplication table in $\OO$, where the left factor is in the first column.
\begin{table}[H]
\caption{Multiplication in $\OO$}
\centering
\begin{tabular}{|c||r|r|r|r|r|r|r|r|}
\hline 
$1$ & $i$ & $j$ & $k$ & $e$ & $f$ & $g$ & $h$\\
\hline 
\hline
$i$ & $-1$ & $k$ & $-j$ & $f$ & $-e$ & $-h$ & $g$\\
\hline 
$j$ & $-k$ & $-1$ & $i$ & $g$ & $h$ & $-e$ & $-f$\\
\hline
$k$ & $j$ & $-i$ & $-1$ & $h$ & $-g$ & $f$ & $-e$\\
\hline
$e$ & $-f$ & $-g$ & $-h$ & $-1$ & $i$ & $j$ & $k$\\
\hline
$f$ & $e$ & $-h$ & $g$ & $-i$ & $-1$ & $-k$ & $j$\\
\hline
$g$ & $h$ & $e$ & $-f$ & $-j$ & $k$ & $-1$ & $-i$\\
\hline
$h$ & $-g$ & $f$ & $e$ & $-k$ & $-j$ & $i$ & $-1$\\
\hline
\end{tabular}
\end{table}

In accordance with the Cayley-Dickson process, the multiplication of elements $x=h_1+h_2 e$, $y=k_1+k_2e\in\OO$ can be also viewed through the multiplication and the conjugation in $\HH$ by the formula
\begin{equation}\label{oct}
xy=(h_1k_1 -\overline k_2 h_2) + (k_2 h_1 + h_2 \overline k_1)e\enspace.
\end{equation}
Note also that the conjugation in $\OO$, defined as $\overline{x}\ug\overline h_1-h_2e$, gives the non-commutativity law $\overline{x y}=\bar y\bar x$.

\begin{lm}\label{lem:cayley-dickson}
Let $\mathcal A^n$ be the Cayley-Dickson algebra obtained inductively from $\mathcal A=\RR$ through the Cayley-Dickson process. Denote by $a^*$ and by $\Re(a)\ug\frac{1}{2}(a+a^*)$ the conjugate and the real part of elements $a \in \mathcal A^n$, respectively. Denote then by $[a,b,c]\ug(ab)c-a(bc)$ the associator of $a,b,c \in \mathcal A^n$, and by $<a,b>$ the scalar product in $\mathcal A^n = \RR^{2^n}$. Then the following formulas hold good for all $a,b,c \in \mathcal A^n$:
\begin{equation}\label{cd} 
(ab)^* = b^*a^*\enspace, \qquad \Re([a,b,c])=0\enspace, \qquad <a,b> = \Re(a b^*)\enspace.
\end{equation}
\end{lm}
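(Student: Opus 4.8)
The plan is to prove all three identities simultaneously by induction on $n$, exploiting the Cayley-Dickson recursion $(a,b)(c,d)=(ac-d^*b,\,da+bc^*)$ and $(a,b)^*=(a^*,-b)$. For the base algebra $\RR$ everything is immediate, since conjugation is the identity and $\RR$ is commutative and associative. For the inductive step I write elements of $\mathcal{A}^n$ as pairs $a=(a_1,a_2)$ with $a_i\in\mathcal{A}^{n-1}$. Two preliminary observations are recorded first: $a^{**}=a$ is inherited directly, and consequently $\Re(a)=\Re(a^*)$; moreover the recursion gives $\Re\big((x,y)\big)=\big(\Re(x),0\big)$, so the real part of an element of $\mathcal{A}^n$ is the real part of its first coordinate, while the Euclidean product splits as $\langle(a_1,a_2),(b_1,b_2)\rangle=\langle a_1,b_1\rangle+\langle a_2,b_2\rangle$.

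The first identity $(ab)^*=b^*a^*$ is exactly the statement that the Cayley-Dickson process sends a $*$-algebra to a $*$-algebra, and I would verify it by expanding both sides with the recursion and comparing coordinates, each coordinate reducing to the inductive hypothesis $(xy)^*=y^*x^*$ in $\mathcal{A}^{n-1}$ together with $x^{**}=x$. Next I would establish the auxiliary fact $\Re(xy)=\Re(yx)$: expanding $\Re(uv)$ and $\Re(vu)$ via the recursion, the first coordinates agree by the inductive commutativity of the real part, and the remaining terms match because $\Re(v_2^*u_2)=\Re\big((v_2^*u_2)^*\big)=\Re(u_2^*v_2)$, using the first identity and $\Re(x)=\Re(x^*)$. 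The third identity then follows quickly: expanding $ab^*$ gives first coordinate $a_1b_1^*+b_2^*a_2$, so $\Re(ab^*)=\Re(a_1b_1^*)+\Re(b_2^*a_2)$, and the inductive hypothesis for the scalar product, combined with $\Re(b_2^*a_2)=\Re(a_2b_2^*)$, turns this into $\langle a_1,b_1\rangle+\langle a_2,b_2\rangle=\langle a,b\rangle$.

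The genuine work, and the step I expect to be the main obstacle, is the vanishing of the real part of the associator. I would expand $\Re\big((ab)c\big)$ and $\Re\big(a(bc)\big)$ by applying the recursion twice and keeping only the first coordinates. The purely diagonal contribution is $\Re\big((a_1b_1)c_1\big)-\Re\big(a_1(b_1c_1)\big)=\Re([a_1,b_1,c_1])$, which vanishes by the inductive hypothesis. What remains is a collection of six mixed terms, each a real part of a triple product of elements of $\mathcal{A}^{n-1}$ (after using the first identity to move the conjugations). The key structural point is that the two facts already available at level $n-1$, namely $\Re(xy)=\Re(yx)$ and $\Re([x,y,z])=0$, together yield full cyclic invariance $\Re(xyz)=\Re(yzx)=\Re(zxy)$ for triple products, with the association immaterial. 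Under this cyclic symmetry the six mixed terms cancel in three pairs, giving $\Re([a,b,c])=0$. The only real difficulty is the bookkeeping: correctly tracking the conjugations and signs produced by the recursion and matching each term with its cyclic partner; once the cyclic-invariance principle is isolated, the cancellation is mechanical.
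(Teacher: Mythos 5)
Your proof is correct, but it organizes the key step differently from the paper. The paper also proves the first and third identities by induction and then isolates the associator identity as the real work; however, instead of expanding $\Re([a,b,c])$ for general pairs, it uses trilinearity of the associator to reduce to arguments of the special form $(x,0)$ or $(0,x)$, giving eight cases: four are immediate because both products $(ab)c$ and $a(bc)$ land in the second slot and so have zero real part, one (all arguments of type $(x,0)$) is the inductive hypothesis, and the remaining three mixed cases are settled by exactly the computation you describe, e.g.\ $\Re(x(z^*y)-z^*(yx))=\Re((z^*y)x-z^*(yx))=\Re([z^*,y,x])=0$. So the underlying algebraic facts coincide: both proofs ultimately rest on $\Re(uv)=\Re(vu)$ at the previous level together with the inductive vanishing of the associator, i.e.\ on the cyclic invariance of $\Re$ on triple products that you isolate explicitly. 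What the paper's reduction buys is precisely the elimination of the bookkeeping you flag as the main obstacle: half the cases require no computation at all, and no signs need to be tracked through a double application of the recursion. What your version buys is that the strengthened induction hypothesis (carrying $\Re(xy)=\Re(yx)$ alongside the three stated identities) and the cyclic-invariance principle are made explicit, whereas the paper uses them silently inside each mixed case; your pairing of the six cross terms ($-\Re((b_2^*a_2)c_1)$ against $+\Re((c_1b_2^*)a_2)$, and so on) is a valid and complete substitute for the paper's case analysis.
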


\begin{proof}
The first and third formulas are verified by induction on $n$, the number of steps in the Cayley-Dickson process. 

To check the second formula, note that by linearity one can assume any of $a,b,c$ to be of the form $(x,0)$ or $(0,x)$, with $x \in \mathcal A^{n-1}$. Thus there are eight cases to be verified. Four of these cases lead to both products $(ab)c, a(bc)$ of type $(0,x)$, hence with zero real part. The case $a=(x,0), b=(y,0), c=(z,0)$ is done by induction on $n$. The remaining three cases are when two among $a,b,c$ are of the form $(x,0)$ and the third of the form $(0,x)$. For example, when $a=(x,0), b=(0,y), c=(0,z)$ one gets $\Re([a,b,c]) = \Re (x(z^*y)-z^*(yx)) = \Re ((z^*y)x-z^*(yx))$ and an inductive argument gives the conclusion.
\end{proof}


We denote by $\LH,\RH$ and $\LO,\RO$ the left and right multiplication in $\HH$ and $\OO$, respectively.
Explicit matrix representations for the right and left multiplication by $i,j,k$ in $\HH$ are
\begin{align}
\RH_i&= \left(
\begin{array}{rrrr}
0&-1&0&0\\
1&0& 0&0\\
0&0&0&1\\
0&0&-1&0
\end{array}
\right)\enspace,\qquad
\RH_j =\left(
\begin{array}{rrrr}
0&0&-1&0\\
0&0& 0&-1\\
1&0&0&0\\
0&1&0&0 
\end{array}
\right)\enspace,\qquad
\RH_k=\left(
\begin{array}{rrrr}
0&0&0&-1\\
0&0& 1&0\\
0&-1&0&0\\
1&0&0&0 
\end{array}
\right)\enspace,\label{right}\\
\LH_i&= \left(
\begin{array}{rrrr}
0&-1&0&0\\
1&0& 0&0\\
0&0&0&-1\\
0&0&1&0
\end{array}
\right)\enspace, \qquad
\LH_j =\left(
\begin{array}{rrrr}
0&0&-1&0\\
0&0& 0&1\\
1&0&0&0\\
0&-1&0&0 
\end{array}
\right)\enspace, \qquad
\LH_k=\left(
\begin{array}{rrrr}
0&0&0&-1\\
0&0& -1&0\\
0&1&0&0\\
1&0&0&0 
\end{array}
\right)\enspace.\label{left}
\end{align}

Although, as well-known, $\CC, \HH$ and $\OO$ are the only normed algebras over $\RR$,
we will use for our first examples also the algebra $\SS$ of sedenions, obtained from $\OO$ through the Cayley-Dickson process. Some characterization of $\SS$ has been given very recently in the context of \emph{locally complex algebras}, cf.~\cite{bss}. For further informations on $\SS$, see also~\cite{bdi} and~\cite{bcdi}.

Denoting by $1, e_1,\dots,e_{15}$ the canonical basis of $\SS$ over $\RR$, we can write the multiplication table~\ref{table:sedenions}, where it appears the existence of divisors of zeroes in $\SS$: for example $(e_2 - e_{11})(e_7 + e_{14})=0$.

\begin{table}[H]
\caption{Multiplication in $\SS$}\label{table:sedenions}
\begin{center}
\begin{tabular}{|c||r|r|r|r|r|r|r|r|r|r|r|r|r|r|r|r|}
\hline 
$1$ & $e_1$ & $e_2$ & $e_3$ & $e_4$ & $e_5$ & $e_6$ & $e_7$ & $e_8$ & $e_9$ & $e_{10}$ & $e_{11}$ & $e_{12}$ & $e_{13}$ & $e_{14}$ & $e_{15}$\\
\hline
\hline
$e_1$ & $-1$ & $e_3$ & $-e_2$ & $e_5$ & $-e_4$ & $-e_7$ & $e_6$ & $e_9$ & $-e_8$ & $-e_{11}$ & $e_{10}$ & $-e_{13}$ & $e_{12}$ & $e_{15}$ & $-e_{14}$\\
\hline 
$e_2$ & $-e_3$ & $-1$ & $e_1$ & $e_6$ & $e_7$ & $-e_4$ & $-e_5$ & $e_{10}$ & $e_{11}$ & $-e_8$ & $-e_9$ & $-e_{14}$ & $-e_{15}$ & $e_{12}$ & $e_{13}$\\
\hline
$e_3$ & $e_2$ & $-e_1$ & $-1$ & $e_7$ & $-e_6$ & $e_5$ & $-e_4$ & $e_{11}$ & $-e_{10}$ & $e_9$ & $-e_8$ & $-e_{15}$ & $e_{14}$ & $-e_{13}$ & $e_{12}$\\
\hline
$e_4$ & $-e_5$ & $-e_6$ & $-e_7$ & $-1$ & $e_1$ & $e_2$ & $e_3$ & $e_{12}$ & $e_{13}$ & $e_{14}$ & $e_{15}$ & $-e_8$ & $-e_9$ & $-e_{10}$ & $-e_{11}$\\
\hline
$e_5$ & $e_4$ & $-e_7$ & $e_6$ & $-e_1$ & $-1$ & $-e_3$ & $e_2$ & $e_{13}$ & $-e_{12}$ & $e_{15}$ & $-e_{14}$ & $e_9$ & $-e_8$ & $e_{11}$ & $-e_{10}$\\
\hline
$e_6$ & $e_7$ & $e_4$ & $-e_5$ & $-e_2$ & $e_3$ & $-1$ & $-e_1$ & $e_{14}$ & $-e_{15}$ & $-e_{12}$ & $e_{13}$ & $e_{10}$ & $-e_{11}$ & $-e_8$ & $e_9$\\
\hline
$e_7$ & $-e_6$ & $e_5$ & $e_4$ & $-e_3$ & $-e_2$ & $e_1$ & $-1$ & $e_{15}$ & $e_{14}$ & $-e_{13}$ & $-e_{12}$ & $e_{11}$ & $e_{10}$ & $-e_9$ & $-e_8$\\
\hline
$e_8$ & $-e_9$ & $-e_{10}$ & $-e_{11}$ & $-e_{12}$ & $-e_{13}$ & $-e_{14}$ & $-e_{15}$ & $-1$ & $e_1$ & $e_2$ & $e_3$ & $e_4$ & $e_5$ & $e_6$ & $e_7$\\
\hline
$e_9$ & $e_8$ & $-e_{11}$ & $e_{10}$ & $-e_{13}$ & $e_{12}$ & $e_{15}$ & $-e_{14}$ & $-e_1$ & $-1$ & $-e_3$ & $e_2$ & $-e_5$ & $e_4$ & $e_7$ & $-e_6$\\
\hline 
$e_{10}$ & $e_{11}$ & $e_8$ & $-e_9$ & $-e_{14}$ & $-e_{15}$ & $e_{12}$ & $e_{13}$ & $-e_2$ & $e_3$ & $-1$ & $-e_1$ & $-e_6$ & $-e_7$ & $e_4$ & $e_5$\\
\hline
$e_{11}$ & $-e_{10}$ & $e_9$ & $e_8$ & $-e_{15}$ & $e_{14}$ & $-e_{13}$ & $e_{12}$ & $-e_3$ & $-e_2$ & $e_1$ & $-1$ & $-e_7$ & $e_6$ & $-e_5$ & $e_4$\\
\hline
$e_{12}$ & $e_{13}$ & $e_{14}$ & $e_{15}$ & $e_8$ & $-e_9$ & $-e_{10}$ & $-e_{11}$ & $-e_4$ & $e_5$ & $e_6$ & $e_7$ & $-1$ & $-e_1$ & $-e_2$ & $-e_3$\\
\hline
$e_{13}$ & $-e_{12}$ & $e_{15}$ & $-e_{14}$ & $e_9$ & $e_8$ & $e_{11}$ & $-e_{10}$ & $-e_5$ & $-e_4$ & $e_7$ & $-e_6$ & $e_1$ & $-1$ & $e_3$ & $-e_2$\\
\hline
$e_{14}$ & $-e_{15}$ & $-e_{12}$ & $e_{13}$ & $e_{10}$ & $-e_{11}$ & $e_8$ & $e_9$ & $-e_6$ & $-e_7$ & $-e_4$ & $e_5$ & $e_2$ & $-e_3$ & $-1$ & $e_1$\\
\hline
$e_{15}$ & $e_{14}$ & $-e_{13}$ & $-e_{12}$ & $e_{11}$ & $e_{10}$ & $-e_9$ & $e_8$ & $-e_7$ & $e_6$ & $-e_5$ & $-e_4$ & $e_3$ & $e_2$ & $-e_1$ & $-1$\\
\hline
\end{tabular}
\end{center}
\end{table}

Consider now the sphere $S^{m-1} \subset\RR^m$, and decompose $m$ as $m=(2k+1)2^p16^q$, where $p\in\{0,1,2,3\}$. First, we observe that a vector field $B$ tangent to the sphere $S^{2^p16^q-1}\subset\RR^{2^p16^q}$ induces a vector field
\begin{equation}\label{eq:blockwise}
\underbrace{(B,\dots,B)}_{2k+1\text{ times}}
\end{equation}
tangent to the sphere $S^{(2k+1)2^p16^q-1}$. For this reason, we will assume in what follows that $m=2^p16^q$. Whenever we extend a vector field in this way, we call the vector field given by~\eqref{eq:blockwise} the \emph{diagonal extension} of $B$.

If $q=0$, that is, if $m$ is not divisible by $16$, the vector fields on $S^{m-1}$ are given by the complex, quaternionic or octonionic multiplication for $p=1,2$ or $3$ respectively. Hence, in this paper we are concerned with the case $q \geq 1$, that is, $m=16l$. In this case, it is convenient to denote the coordinates in $\RR^{16l}$ by $(s^1,\dots,s^l)$ where each $s^\alpha$, for $\alpha=1,\dots,l$, belongs to $\SS$, and can thus be identified with a pair $(x^\alpha,y^\alpha)$ of octonions.

The unit (outward) normal vector field $N$ of $S^{m-1}$ is still denoted by
\[
N \ug (s^1,\dots,s^l)\quad\text{where}\quad\norm{s^1}^2+\dots+\norm{s^l}^2=1\enspace.
\]
Therefore, we can think of $N$ as an element of $\SS^l=(\OO^2)^l=\RR^{16l}$.

Whenever $l=2,4$ or $8$, denote by $\coniugiobaseraw$ the automorphism of $\SS^l=(\OO^2)^l$ given by
\begin{equation}\label{star}
\coniugiobaseraw: ((x^1,y^1),\dots,(x^l,y^l))\longrightarrow ((x^1,-y^1),\dots, (x^l,-y^l))\enspace.
\end{equation}
We will refer to $\coniugiobaseraw$ as a \emph{conjugation}, due to its similarity with that in $*$-algebras.

Moreover, it is convenient to use formal notations as:
\begin{align}
N&=(s^1,s^2)\ug s^1+i s^2 \in S^{31}\enspace,\label{eq:formalC}\\
N&=(s^1,s^2,s^3,s^4)\ug s^1+i s^2 +j s^3 +k s^4 \in S^{63}\enspace,\label{eq:formalH}\\
N&=(s^1,s^2,s^3,s^4,s^5,s^6,s^7,s^8)\ug s^1+i s^2 +j s^3 +k s^4+e s^5+f s^6+g s^7+h s^8 \in S^{127}\enspace,\label{eq:formalO}
\end{align}
allowing to define left multiplications $\LF$ in sedenionic spaces $\SS^{2}$, $\SS^{4}$ and $\SS^{8}$ (like in $\CC$, $\HH$ and $\OO$) as follows.

If $l=2$ the left multiplication is
\begin{equation}\label{i}
\LF_i(s^1,s^2)\ug -s^2+is^1\enspace,
\end{equation}
whereas if $l=4$ we define
\begin{equation}\label{ijk}
\begin{split}
\LF_i(s^1,\dots,s^4)&\ug -s^2+i s^1-js^4+ k s^3\enspace,\\
\LF_j(s^1,\dots,s^4)&\ug -s^3+i s^4+js^1- k s^2\enspace,\\
\LF_k(s^1,\dots,s^4)&\ug -s^4-i s^3+js^2+ k s^1\enspace,
\end{split}
\end{equation}
and finally if $l=8$ we define
\begin{equation}\label{ijkefgh}
\begin{split}
\LF_i(s^1,\dots,s^8)&\ug -s^2+i s^1-js^4+ k s^3- e s^6+f s^5+gs^8-h s^7\enspace, \\
\LF_j(s^1,\dots,s^8)&\ug -s^3+i s^4+js^1- k s^2 - e s^7-f s^8 +g s^5+ h s^6\enspace, \\
\LF_k(s^1,\dots,s^8)&\ug -s^4-i s^3+js^2+ k s^1- e s^8+f s^7- g s^6+ h s^5\enspace, \\
\LF_e(s^1,\dots,s^8)&\ug -s^5+i s^6+js^7+ k s^6+e s^1-f s^2-gs^3 -h s^4\enspace, \\
\LF_f(s^1,\dots,s^8)&\ug -s^6-i s^5+js^8- k s^7+es^2+f s^1+gs^4-h s^3\enspace, \\
\LF_g(s^1,\dots,s^8)&\ug -s^7-i s^8-js^5+ k s^6+es^3-f s^4+gs^1+ h s^2\enspace, \\
\LF_h(s^1,\dots,s^8)&\ug -s^8+i s^7-js^6- k s^5 +es^4+f s^3-gs^2+ h s^1\enspace.
\end{split}
\end{equation}
Note that in all three cases $l=2,4$ and $8$ the vector fields $\LF_i(N),\dots,\LF_h(N)$ are tangent to $S^{31}$, $S^{63}$ and $S^{127}$, respectively.

In accordance with its role in the Hopf fibration~\eqref{hopffibration}, $\Spin{9}$ can be also defined as the subgroup of $\SO{16}$ preserving the decomposition of $\OO^2$ in octonionic lines $l_c=\{(x,cx)\}$ and $l_\infty =\{(0,y)\}$, where $x,c,y \in \OO$ (see~\cite{gl-wa-zi}). In the framework of $G$-structures, a \emph{$\Spin{9}$-structure} on a Riemannian  manifold $M^{16}$ can be defined as a rank $9$ vector subbundle $V \subset \End{TM}$, locally spanned by nine endomorphisms $\I_\alpha$ satisfying the following conditions:
\begin{equation}\label{top}
\I^2_\alpha = \Id\enspace, \qquad \I^*_\alpha = \I_\alpha\enspace, \qquad \I_\alpha \I_\beta = - \I_\beta \I_\alpha \quad \text{if}\quad \alpha \neq \beta\enspace,
\end{equation}
where $\I^*_\alpha$ denotes the adjoint of $\I_\alpha$ (cf.\ the Introduction as well as~\cite{fr}).

For $M=\RR^{16}$, $\I_1, \dots, \I_9$ are generators of the Clifford algebra $\Cl{9}$, considered as endomorphisms of its $16$-dimensional real representation $\Delta_9 = \RR^{16} = \OO^2$. Accordingly, unit vectors $v \in S^8 \subset \RR^9$ can be looked at, via the Clifford multiplication, as symmetric endomorphisms $v: \Delta_9 \rightarrow \Delta_9$. As seen in the Introduction, the explicit way to describe this is by $v = r + u \in S^8 \subset \RR\times\OO$ (that is, $r \in \RR, u \in \OO$ and $r^2+u\bar u=1$) acting on pairs $(x,y) \in \OO^2$ by
\begin{equation}\label{ha}
   \left(
 \begin{array}{c} 
x \\ 
y
 \end{array}
 \right)  \longrightarrow  \left( \begin{array}{cc} 
r& \RO_{\overline u} \\ 
\RO_u & -r
 \end{array}
 \right)  
 \left(
 \begin{array}{c} 
x \\ 
y
 \end{array}
 \right),
\end{equation}
where $\RO_u$ and $\RO_{\overline u}$ denote the right multiplication on the octonions by $u$ and $\overline u$, respectively  (cf.~\cite[page 280]{ha}).


A basis of the standard $\Spin{9}$ structure on $\SS=\OO^2$ can thus be written by looking at~\eqref{ha} and at the nine vectors $(0,1),(0,i),\dots,(0,h),(1,0) \in S^8 \subset \RR\times\OO$. It consists of the following symmetric endomorphisms:
\begin{equation}\label{topmatrices}
\begin{aligned}
\I_1&=\left(
\begin{array}{c|c}
0 & \Id \\ \hline
\Id & 0
\end{array}
\right)\enspace,\qquad &
\I_2&=\left(
\begin{array}{c|c}
0 & -\RO_i \\ \hline
\RO_i & 0
\end{array}
\right)\enspace,\qquad &
\I_3&=\left(
\begin{array}{c|c}
0 & -\RO_j \\ \hline
\RO_j & 0
\end{array}
\right)\enspace, \\
\I_4&=\left(
\begin{array}{c|c}
0 & -\RO_k \\ \hline
\RO_k & 0
\end{array}
\right)\enspace,\qquad &
\I_5&=\left(
\begin{array}{c|c}
0 & -\RO_e \\ \hline
\RO_e & 0
\end{array}
\right)\enspace,\qquad &
\I_6&=\left(
\begin{array}{c|c}
0 & -\RO_f\\ \hline
\RO_f & 0
\end{array}
\right)\enspace, \\
\I_7&=\left(
\begin{array}{c|c}
0 & -\RO_g \\ \hline
\RO_g & 0
\end{array}
\right)\enspace,\qquad &
\I_8&=\left(
\begin{array}{c|c}
0 & -\RO_h \\ \hline
\RO_h & 0
\end{array}
\right)\enspace,\qquad &
\I_9&=\left(
\begin{array}{c|c}
\Id & 0 \\ \hline
0 & -\Id
\end{array}
\right)\enspace.
\end{aligned}
\end{equation}

The right octonionic multiplications $\RO_i,\dots,\RO_h$ can be written as $8 \times 8$ matrices using~\eqref{oct}:
\begin{equation}\label{matricesspin7}
\begin{aligned}
\RO_i=\left(
\begin{array}{c|c}
\RH_i & 0 \\ \hline
0 & -\RH_i
\end{array}
\right)\enspace,\qquad
\RO_j&=\left(
\begin{array}{c|c}
\RH_j & 0 \\ \hline
0 & -\RH_j
\end{array}
\right)\enspace,\qquad &
\RO_k&=\left(
\begin{array}{c|c}
\RH_k & 0 \\ \hline
0 & -\RH_k
\end{array}
\right)\enspace, \\
\RO_e&=\left(
\begin{array}{c|c}
0 & -\Id \\ \hline
\Id & 0
\end{array}
\right)\enspace,\qquad &
\RO_f&=\left(
\begin{array}{c|c}
0 & \LH_i \\ \hline
\LH_i & 0
\end{array}
\right)\enspace,\\
\RO_g&=\left(
\begin{array}{c|c}
0 & \LH_j \\ \hline
\LH_j & 0
\end{array}
\right)\enspace,\qquad &
\RO_h&=\left(
\begin{array}{c|c}
0 & \LH_k \\ \hline
\LH_k & 0
\end{array}
\right)\enspace.
\end{aligned}
\end{equation}

The space $\Lambda^2\RR^{16}$ of $2$-forms in $\RR^{16}$ decomposes under $\Spin{9}$ as
\[
\Lambda^2\RR^{16} = \Lambda^2_{36} \oplus \Lambda^2_{84}
\]
(cf.~\cite[page 146]{fr}), where  $\Lambda^2_{36} \cong \liespin{9}$ and $ \Lambda^2_{84}$ is an orthogonal complement in $\Lambda^2\RR^{16} \cong \lieso{16}$. Explicit generators of both subspaces can be written by looking at the $36$ compositions $\J_{\alpha \beta} \ug \I_\alpha \I_\beta$, for $\alpha <\beta$, and at the $84$ compositions $\J_{\alpha \beta \gamma} \ug \I_\alpha \I_\beta \I_\gamma$, for $\alpha <\beta<\gamma$, all complex structures on $\RR^{16}$.

Among the $36$ complex structures $\J_{\alpha \beta}$, in this paper we use only the eight $\J_\alpha\ug\J_{\alpha 9}$, whose matrix form is:
\begin{equation}\label{eq:J2}
\begin{aligned}
\J_{1}&=\left(
\begin{array}{c|c}
0 & -\Id \\ \hline
\Id & 0
\end{array}
\right),\thinspace &
\J_{2}&=\left(
\begin{array}{c|c}
0 & \RO_i \\ \hline
\RO_i & 0
\end{array}
\right),\thinspace &
\J_{3}&=\left(
\begin{array}{c|c}
0 & \RO_j \\ \hline
\RO_j & 0
\end{array}
\right),\thinspace &
\J_{4}&=\left(
\begin{array}{c|c}
0 & \RO_k \\ \hline
\RO_k & 0
\end{array}
\right),\\
\J_{5}&=\left(
\begin{array}{c|c}
0 & \RO_e \\ \hline
\RO_e & 0
\end{array}
\right),\thinspace &
\J_{6}&=\left(
\begin{array}{c|c}
0 & \RO_f \\ \hline
\RO_f & 0
\end{array}
\right),\thinspace &
\J_{7}&=\left(
\begin{array}{c|c}
0 & \RO_g \\ \hline
\RO_g & 0
\end{array}
\right),\thinspace &
\J_{8}&=\left(
\begin{array}{c|c}
0 & \RO_h \\ \hline
\RO_h & 0
\end{array}
\right).
\end{aligned}
\end{equation}
For the matrices associated with the remaining $\J_{\alpha \beta}$, where $1\leq \alpha < \beta \leq 8$, see~\cite{pp}.

\begin{re}\label{re:blockwise}
Of course, one can see the complex structures $\J_{\alpha}$ as defined by \eqref{eq:J2} on the algebra $\SS$ of sedenions. In the following Sections~\ref{sec:15}, \ref{sec:31-255} and \ref{sec:511} we use the same symbol $\J_{\alpha}$ to denote their diagonal extensions to $\SS^l$, as in \eqref{eq:blockwise}, so that if $N=(s^1,\dots,s^l)$ we have $\J_{\alpha}N\ug(\J_{\alpha}s^1,\dots,\J_{\alpha}s^l)$.
\hfill\qed
\end{re}

\begin{re}
As a matter of notations, it is worth to mention that, throughout the paper, letters $i,j,k,e,f,g,h$ denote only units in $\CC$, $\HH$ and~$\OO$. Instead, indexes are denoted by greek letters $\alpha, \beta, \dots$. 
\hfill\qed
\end{re}

 
\section{The sphere $S^{15}$}\label{sec:15}

Denote by
\[
N\ug (x,y)\ug(x_1,\dots,x_8,y_1,\dots,y_8) 
\]
the (outward) unit normal vector field of $S^{15} \subset \RR^{16}=\SS$. We point out that the existence of zero divisors in the algebra $\SS$ of sedenions infers that left multiplications 
by the sedenions unities $e_\alpha$, for $\alpha=1,\dots, 15$, give rise to $15$ vector fields that are \emph{not linearly independent}. For example, consider the normal vector $\vec b = \frac{1}{\sqrt 2}  e_7 +\frac{1}{\sqrt 2}  e_{14}$: by looking at Table~\ref{table:sedenions} one sees that $e_2\vec b = e_{11}\vec b$. 

On the other hand, the identification $\RR^{16}=\OO^2$ gives on $S^{15}$ the $7$ tangent vector fields
\begin{equation}\label{hopf}
\begin{split}
\LO_iB\ug (\LO_ix,\LO_iy)&=(-x_2,x_1,-x_4,x_3,-x_6,x_5,x_8,-x_7,-y_2,y_1,-y_4,y_3,-y_6,y_5,y_8,-y_7)\enspace,\\
\LO_jB\ug (\LO_jx,\LO_jy)&=(-x_3,x_4,x_1,-x_2,-x_7,-x_8,x_5,x_6,-y_3,y_4,y_1,-y_2,-y_7,-y_8,y_5,y_6)\enspace,\\
\LO_kB\ug (\LO_kx,\LO_ky)&=(-x_4,-x_3,x_2,x_1,-x_8,x_7,-x_6,x_5,-y_4,-y_3,y_2,y_1,-y_8,y_7,-y_6,y_5)\enspace,\\
\LO_eB\ug (\LO_ex,\LO_ey)&=(-x_5,x_6,x_7,x_8,x_1,-x_2,-x_3,-x_4,-y_5,y_6,y_7,y_8,y_1,-y_2,-y_3,-y_4)\enspace,\\
\LO_fB\ug (\LO_fx,\LO_fy)&=(-x_6,-x_5,x_8,-x_7,x_2,x_1,x_4,-x_3,-y_6,-y_5,y_8,-y_7,y_2,y_1,y_4,-y_3)\enspace,\\
\LO_gB\ug (\LO_gx,\LO_gy)&=(-x_7,-x_8,-x_5,x_6,x_3,-x_4,x_1,x_2,-y_7,-y_8,-y_5,y_6,y_3,-y_4,y_1,y_2)\enspace,\\
\LO_hB\ug (\LO_hx,\LO_hy)&=(-x_8,x_7,-x_6,-x_5,x_4,x_3,-x_2,x_1,-y_8,y_7,-y_6,-y_5,y_4,y_3,-y_2,y_1)\enspace,\\
\end{split}
\end{equation}
spanning the distribution of vertical leaves in the Hopf fibration $S^{15} \rightarrow S^8$. Thus, any other vector field on $S^{15} \subset \RR^{16}$ orthogonal to the $7$ listed in~\eqref{hopf} should belong to the horizontal distribution of the Hopf fibration.


To work out a construction of $8$ orthonormal vector fields on $S^{15}$, consider (as a possible choice among the $36$ complex structures $\I_{\alpha}\I_{\beta}$, $\alpha < \beta$) the eight complex structures given in~\eqref{eq:J2}:
\[
\J_{1}, \dots, \J_{8}: \SS \longrightarrow \SS\enspace.
\] 

\begin{pr}
The $8$ vector fields
\begin{equation}\label{otto}
\begin{split}
\J_{1}N =&(-y_1,-y_2,-y_3,-y_4,-y_5,-y_6,-y_7,-y_8,x_1,x_2,x_3,x_4,x_5,x_6,x_7,x_8)\enspace,\\
\J_{2}N =&(-y_2,y_1,y_4,-y_3,y_6,-y_5,-y_8,y_7,-x_2,x_1,x_4,-x_3,x_6,-x_5,-x_8,x_7)\enspace,\\
\J_{3}N =&(-y_3,-y_4,y_1,y_2,y_7,y_8,-y_5,-y_6,-x_3,-x_4,x_1,x_2,x_7,x_8,-x_5,-x_6)\enspace,\\
\J_{4}N =&(-y_4,y_3,-y_2,y_1,y_8,-y_7,y_6,-y_5,-x_4,x_3,-x_2,x_1,x_8,-x_7,x_6,-x_5)\enspace,\\
\J_{5}N =&(-y_5,-y_6,-y_7,-y_8,y_1,y_2,y_3,y_4,-x_5,-x_6,-x_7,-x_8,x_1,x_2,x_3,x_4)\enspace,\\
\J_{6}N =&(-y_6,y_5,-y_8,y_7,-y_2,y_1,-y_4,y_3,-x_6,x_5,-x_8,x_7,-x_2,x_1,-x_4,x_3)\enspace,\\
\J_{7}N =&(-y_7,y_8,y_5,-y_6,-y_3,y_4,y_1,-y_2,-x_7,x_8,x_5,-x_6,-x_3,x_4,x_1,-x_2)\enspace,\\
\J_{8}N =&(-y_8,-y_7,y_6,y_5,-y_4,-y_3,y_2,y_1,-x_8,-x_7,x_6,x_5,-x_4,-x_3,x_2,x_1)
\end{split}
\end{equation}
are tangent to $S^{15}$ and orthonormal.
\end{pr}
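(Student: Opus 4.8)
The plan is to avoid the explicit $16$-component expressions in \eqref{otto} altogether and argue structurally from the defining relations \eqref{top} of the $\Spin{9}$-structure, since tangency and orthonormality are intrinsic metric statements that do not require coordinates. Recall that a vector $V$ is tangent to the unit sphere $S^{15}$ at the point $N$ precisely when $\langle V, N\rangle = 0$, and that, because $\norm{N}=1$, the orthonormality of $\J_{1} N,\dots,\J_{8} N$ is exactly the collection of identities $\langle \J_\alpha N, \J_\beta N\rangle = \delta_{\alpha\beta}$ for $1\le\alpha,\beta\le 8$.

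First I would isolate the two algebraic properties of $\J_\alpha = \I_\alpha\I_9$ that do all the work. From \eqref{top} each $\I_\gamma$ is a symmetric involution and distinct $\I$'s anticommute, so $\J_\alpha^T = \I_9^T\I_\alpha^T = \I_9\I_\alpha = -\I_\alpha\I_9 = -\J_\alpha$, i.e.\ $\J_\alpha$ is skew-symmetric; moreover $\J_\alpha^2 = \I_\alpha\I_9\I_\alpha\I_9 = -\I_\alpha^2\I_9^2 = -\Id$, whence $\J_\alpha^T\J_\alpha = -\J_\alpha^2 = \Id$ and each $\J_\alpha$ is orthogonal. Tangency is then immediate: skew-symmetry gives $\langle \J_\alpha N, N\rangle = N^T\J_\alpha^T N = -N^T\J_\alpha N = -\langle \J_\alpha N, N\rangle$, forcing the inner product to vanish. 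The diagonal case $\alpha=\beta$ of orthonormality is equally immediate, since orthogonality of $\J_\alpha$ yields $\norm{\J_\alpha N} = \norm{N} = 1$.

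For the off-diagonal case $\alpha\neq\beta$ I would compute $\J_\alpha^T\J_\beta = -\J_\alpha\J_\beta$ and simplify $\J_\alpha\J_\beta = \I_\alpha\I_9\I_\beta\I_9 = -\I_\alpha\I_\beta\I_9^2 = -\I_\alpha\I_\beta$, using $\I_9\I_\beta = -\I_\beta\I_9$; hence $\J_\alpha^T\J_\beta = \I_\alpha\I_\beta$. Since $\alpha\neq\beta$, the composition $\I_\alpha\I_\beta$ is again skew-symmetric, because $(\I_\alpha\I_\beta)^T = \I_\beta\I_\alpha = -\I_\alpha\I_\beta$, so the very same argument as for tangency gives $\langle \J_\alpha N, \J_\beta N\rangle = N^T\I_\alpha\I_\beta N = 0$.

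I do not expect a genuine obstacle: once the skew-symmetry and orthogonality of the $\J_\alpha$ are recorded, both conclusions reduce to the elementary fact that a skew-symmetric endomorphism $A$ satisfies $\langle AN, N\rangle = 0$. The only point requiring care is the bookkeeping of the anticommutation relations when reducing $\J_\alpha^T$ and $\J_\alpha\J_\beta$, that is, keeping track of the single sign produced by moving $\I_9$ past $\I_\alpha$ or $\I_\beta$. A reader preferring a hands-on check can instead verify $\langle \J_\alpha N, \J_\beta N\rangle = \delta_{\alpha\beta}$ directly from the components in \eqref{otto}, but the structural argument has the advantage of depending only on the relations \eqref{top}, and thus of foreshadowing the higher-dimensional constructions of Section~\ref{sec:generalcase}.
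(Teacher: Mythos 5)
Your proof is correct and follows essentially the same route as the paper: the paper subsumes the statement in a slightly more general proposition (replacing $\I_9$ by any $\I_\beta$) whose proof is the one-line remark that everything follows from the relations~\eqref{top}, and your argument is precisely the detailed execution of that remark. The only difference is that you spell out the skew-symmetry, orthogonality, and anticommutation computations explicitly, which the paper leaves to the reader.
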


This comes indeed as a special case of the following slightly more general proposition.
\begin{pr}
Fix any $\beta$, $1\leq \beta \leq 9$, and consider the $8$ complex structures $\I_\alpha\I_\beta$, with $\alpha \neq \beta$. Then the $8$ vector fields
$\I_\alpha\I_\beta N$ are tangent to $S^{15}$ and orthonormal.
\end{pr}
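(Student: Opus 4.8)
The plan is to verify the two required properties—tangency and orthonormality—directly from the defining properties of the $\I_\alpha$ recorded in~\eqref{top}. Throughout, fix $\beta$ and write $\J_\alpha = \I_\alpha\I_\beta$ for $\alpha\neq\beta$; these are the $8$ maps whose action on $N$ we must control. The key algebraic facts I would use are: each $\I_\alpha$ is a self-dual involution ($\I_\alpha^2=\Id$, $\I_\alpha^*=\I_\alpha$), and distinct involutions anticommute ($\I_\alpha\I_\gamma=-\I_\gamma\I_\alpha$). From these it follows immediately that each $\J_\alpha$ is a complex structure: $\J_\alpha^2 = \I_\alpha\I_\beta\I_\alpha\I_\beta = -\I_\alpha^2\I_\beta^2 = -\Id$, using the anticommutation to swap the middle two factors. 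Likewise $\J_\alpha^* = (\I_\alpha\I_\beta)^* = \I_\beta^*\I_\alpha^* = \I_\beta\I_\alpha = -\I_\alpha\I_\beta = -\J_\alpha$, so each $\J_\alpha$ is skew-adjoint, i.e.\ an orthogonal complex structure.

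For \emph{tangency}, I must check $\langle \J_\alpha N, N\rangle = 0$ at every point of $S^{15}$. This is exactly the statement that $\J_\alpha$ is skew-adjoint: $\langle \J_\alpha N, N\rangle = \langle N, \J_\alpha^* N\rangle = -\langle N, \J_\alpha N\rangle$, forcing the inner product to vanish. So tangency is an immediate consequence of $\J_\alpha^*=-\J_\alpha$, which we just established. Since $\J_\alpha$ is orthogonal (being a product of the orthogonal maps $\I_\alpha,\I_\beta$) and $\norm{N}=1$, each $\J_\alpha N$ is automatically a unit vector, giving the normalization part of ``orthonormal'' for free.

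The substantive step is the \emph{mutual orthogonality} of the eight fields, i.e.\ $\langle \J_\alpha N, \J_\gamma N\rangle = 0$ for $\alpha\neq\gamma$ (both distinct from $\beta$). Using orthogonality of $\J_\gamma$ we have $\langle \J_\alpha N,\J_\gamma N\rangle = \langle \J_\gamma^*\J_\alpha N, N\rangle = -\langle \J_\gamma\J_\alpha N,N\rangle$, so it suffices to show $\J_\gamma\J_\alpha$ is skew-adjoint, equivalently that $\langle \J_\gamma\J_\alpha N, N\rangle=0$. Now $\J_\gamma\J_\alpha = \I_\gamma\I_\beta\I_\alpha\I_\beta$; pushing the second $\I_\beta$ leftward past $\I_\alpha$ (which anticommutes with it, since $\alpha\neq\beta$) and then past $\I_\beta$ itself, one computes $\J_\gamma\J_\alpha = -\I_\gamma\I_\beta\I_\beta\I_\alpha = -\I_\gamma\I_\alpha$. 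Hence $(\J_\gamma\J_\alpha)^* = -(\I_\gamma\I_\alpha)^* = -\I_\alpha\I_\gamma = \I_\gamma\I_\alpha = -\J_\gamma\J_\alpha$, where the middle equality again uses $\alpha\neq\gamma$. Thus $\J_\gamma\J_\alpha$ is skew-adjoint and the desired inner product vanishes.

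I expect no serious obstacle: the entire argument reduces to bookkeeping with the relations in~\eqref{top}, and the only point requiring mild care is keeping track of which indices are assumed distinct (we need $\alpha,\gamma,\beta$ pairwise distinct to invoke anticommutation at each step, which holds precisely because $\alpha,\gamma\neq\beta$ and $\alpha\neq\gamma$). The earlier explicit Proposition with the component formulas~\eqref{otto} is then recovered as the special case $\beta=9$, and I would simply remark that its orthonormality can alternatively be checked component-by-component, but the conceptual proof above makes the explicit computation unnecessary.
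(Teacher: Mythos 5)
Your proof is correct and follows exactly the route the paper intends: the paper's own proof consists of the single remark that properties~\eqref{top} of the $\I_\alpha$ suffice, and your argument is precisely the detailed working-out of that remark (skew-adjointness of $\I_\alpha\I_\beta$ giving tangency and unit length, and skew-adjointness of $\J_\gamma\J_\alpha=-\I_\gamma\I_\alpha$ giving mutual orthogonality).
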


\begin{proof}
It is sufficient to use properties~\eqref{top} of the symmetric endomorphisms $\I_{\alpha}$. 
\end{proof}

\section{Spheres $S^{2^p16-1}$, for $p=1,2,3$, and $S^{255}$}\label{sec:31-255}

In this Section, we write explicitly maximal systems of vector fields for $S^{31}$, $S^{63}$, $S^{127}$. The proof that any of these systems is orthonormal is straightforward. We will also explain the $S^{255}$ case.

\subsection*{Case $\mathbf{p=1}$}

The next sphere having more than $7$ tangent vector fields is $S^{31}$, whose maximal number is $9$. In this case, we obtain $8$ vector fields by writing the unit normal vector field as $N=(s^1,s^2)=(x^1,y^1,x^2,y^2)\in S^{31}\subset \SS^{2}$, where $x^1,y^1,x^2,y^2\in\OO$, and repeating Formulas~\eqref{otto} for each pair $(x^1,y^1),(x^2,y^2)$:
\begin{equation}\label{otto'}
\begin{split}
\J_{1}N&=(\J_{1}s^1,\J_{1}s^2)\\
&=(-y^1_1,-y^1_2,\dots ,-y^1_7 ,-y^1_8,x^1_1,x^1_2,\dots ,x^1_7 ,x^1_8,-y^2_1,-y^2_2, \dots ,-y^2_7,-y^2_8,x^2_1,x^2_2,\dots ,x^2_7 ,x^2_8)\enspace,\\
\J_{2}N&=(\J_{2}s^1,\J_{2}s^2)\\
&=(-y^1_2,y^1_1,\dots ,-y^1_8,y^1_7,-x^1_2,x^1_1,\dots -x^1_8,x^1_7,-y^2_2,y^2_1,\dots ,-y^2_8 ,y^2_7,-x^2_2,x^2_1,\dots ,-x^2_8 ,x^2_7)\enspace,\\
\J_{3}N&=(\J_{3}s^1,\J_{3}s^2)\\
&=(-y^1_3,-y^1_4,\dots ,-y^1_5,-y^1_6,-x^1_3,-x^1_4,\dots ,-x^1_5,-x^1_6,-y^2_3,-y^2_4,\dots ,-y^2_5,-y^2_6,-x^2_3,-x^2_4,\dots ,-x^2_5,-x^2_6)\enspace,\\
\J_{4}N&=(\J_{4}s^1,\J_{4}s^2)\\
&=(-y^1_4,y^1_3,\dots ,y^1_6 ,-y^1_5,-x^1_4,x^1_3,\dots ,x^1_6 ,-x^1_5,-y^2_4,y^2_3,\dots ,y^2_6,-y^2_5,-x^2_4,x^2_3,\dots ,x^2_6,-x^2_5)\enspace,\\
\J_{5}N&=(\J_{5}s^1,\J_{5}s^2)\\
&=(-y^1_5,-y^1_6,\dots ,y^1_3 ,y^1_4,-x^1_5,-x^1_6,\dots ,x^1_3,x^1_4,-y^2_5,-y^2_6,\dots ,y^2_3,y^2_4,-x^2_5,-x^2_6,\dots ,x^2_3 ,x^2_4)\enspace,\\
\J_{6}N&=(\J_{6}s^1,\J_{6}s^2)\\
&=(-y^1_6,y^1_5,\dots ,-y^1_4 ,y^1_3,-x^1_6,x^1_5,\dots ,-x^1_4 ,x^1_3,-y^2_6,y^2_5,\dots ,-y^2_4 ,y^2_3,-x^2_6,x^2_5,\dots ,-x^2_4 ,x^2_3)\enspace,\\
\J_{7}N&=(\J_{7}s^1,\J_{7}s^2)\\
&=(-y^1_7,y^1_8,\dots ,y^1_1 ,-y^1_2,-x^1_7,x^1_8,\dots ,x^1_1 ,-x^1_2,-y^2_7,y^2_8,\dots ,y^2_1 ,-y^2_2,-x^2_7,x^2_8,\dots ,x^2_1,-x^2_2)\enspace,\\
\J_{8}N&=(\J_{8}s^1,\J_{8}s^2)\\
&=(-y^1_8,-y^1_7,\dots ,y^1_2 ,y^1_1,-x^1_8,-x^1_7, \dots ,x^1_2 ,x^1_1,-y^2_8,-y^2_7,\dots ,y^2_2 ,y^2_1,-x^2_8,-x^2_7, \dots ,x^2_2 ,x^2_1)\enspace.
\end{split}
\end{equation}

A ninth orthonormal vector field, completing the maximal system, is found using the formal left multiplication~\eqref{i} and the automorphism $\coniugiobaseraw$~\eqref{star}:
\begin{equation}
\coniugiobaseraw(\LF_i N)=\coniugiobaseraw(-s^2,s^1)=(-x^2,y^2,x^1,-y^1)\enspace.
\end{equation}

\subsection*{Case $\mathbf{p=2}$}

The sphere $S^{63}$ has a maximal number of $11$ orthonormal vector fields. The normal vector field is in this case given by $N=(s^1,\dots,s^4)=(x^1,y^1,\dots,x^4,y^4)\in S^{63} \subset \SS^{4}$, and $8$ vector fields arise as $\J_{\alpha}N$, for $\alpha=1,\dots,8$. Three other vector fields are again given by the formal left multiplications~\ref{ijk} and the automorphism $\coniugiobaseraw$~\eqref{star}:
\begin{equation}
\begin{split}
\coniugiobaseraw(\LF_i N) &=(-x^2,y^2,x^1,-y^1,-x^4,y^4,x^3,-y^3)\enspace,\\ 
\coniugiobaseraw (\LF_j N) &=(-x^3,y^3,x^4,-y^4,x^1,-y^1,-x^2,y^2)\enspace,\\
\coniugiobaseraw (\LF_k N) &=(-x^4,y^4,-x^3,y^3,x^2,-y^2,x^1,-y^1)\enspace.
\end{split}
\end{equation}

\subsection*{Case $\mathbf{p=3}$}

The sphere $S^{127}$ has a maximal number of $15$ orthonormal vector fields. Eight of them are still given by $\J_{\alpha}N$, for $\alpha=1,\dots,8$, whereas the formal left multiplications given in~\ref{ijkefgh} yield the $7$ tangent vector fields $\coniugiobaseraw(\LF_\alpha N)$, for $\alpha\in\{i,\dots,h\}$.


\begin{re}
Up to dimension $127$ the vector fields were built through the following construction: the first $8$ were the (diagonal extension of the) $\J_{\alpha}$, and the next $1,3$ or $7$ respectively obtained from the $\CC$, $\HH$ or $\OO$ actions by left multiplications on blocks of $16$ coordinates, as in Formulas~\eqref{i},~\eqref{ijk},~\eqref{ijkefgh}. We call those actions the \emph{block extensions} of the original actions, and in what follows we denote the extension of $A$ by $\blocktext(A)$.
\hfill\qed
\end{re}

\subsection*{The sphere $S^{255}$}\label{sec:256}

To write a system of $16$ orthonormal vector fields on $S^{255} \subset \RR^{256}$ consider the decomposition
\begin{equation}\label{256}
\RR^{256} = \RR^{16} \oplus \dots \oplus \RR^{16}\enspace,
\end{equation}
and observe that both the number and the dimension of components are sixteen. The unit outward normal vector field can be written as
\[
N=(s^1,\dots,s^{16})\enspace,
\]
where $s^1,\dots, s^{16}$ are sedenions.

Consider now the $16 \times 16$ matrices giving the complex structures $\J_{1},\dots,\J_{8}$, and listed as~\eqref{eq:J2}. They act on $N$ not only on the $16$-dimensional components of~\eqref{256}, but also formally on the (column) $16$-ples of sedenions $(s^1,\dots,s^{16})^T$. According to which of the two actions of the same matrices are considered in $\RR^{256}$, we use the notations 
\[
\J_{1},\dots,\J_{8} \qquad \text{or} \qquad \blocktext(\J_{1}),\dots, \blocktext(\J_{8})
\]
for the obtained complex structures on $\RR^{256}$. The following $16$ vector fields are obtained: 
\begin{align}
\J_{1}N\enspace,\enspace&\dots\enspace,\J_{8}N\enspace,\label{eq:level1}\\
\coniugiobaseraw(\blocktext(\J_{1})N)\enspace,\enspace&\dots\enspace,\coniugiobaseraw(\blocktext(\J_{8})N)\enspace,\label{eq:level2}
\end{align}
where $\coniugiobaseraw$ has been defined in Formula~\eqref{star}.
We call \emph{level $1$ vector fields} and \emph{level $2$ vector fields} the ones given by \eqref{eq:level1} and \eqref{eq:level2} respectively.

We checked by \mathematica\ that \eqref{eq:level1} and \eqref{eq:level2} give an orthonormal (maximal) system, but we give now an elementary algebraic proof.

\begin{pr}\label{16}
Formulas~\eqref{eq:level1} and \eqref{eq:level2} give a maximal system of $16$ orthonormal tangent vector fields on $S^{255}$.
\end{pr}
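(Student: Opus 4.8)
The plan is to reduce the whole statement to three purely algebraic properties of the sixteen operators, and then to read those properties off the structure already in place. Write $\A_\alpha$ for the level $1$ operator (the diagonal extension of $\J_\alpha$, so the field in \eqref{eq:level1} is $\A_\alpha N=\J_\alpha N$ in the sense of Remark~\ref{re:blockwise}) and $\B_\alpha\ug\coniugiobaseraw\circ\blocktext(\J_\alpha)$ for the level $2$ operator, so the field in \eqref{eq:level2} is $\B_\alpha N$. For any constant linear map $X$ on $\RR^{256}$ one has $\langle XN,N\rangle=\tfrac12 N^T(X+X^T)N$ and $\norm{XN}^2=N^TX^TXN$, whence $XN$ is tangent to $S^{255}$ and of unit length as soon as $X$ is skew-symmetric and orthogonal; and for skew-symmetric $X,Y$ one has $\langle XN,YN\rangle=-\tfrac12 N^T(XY+YX)N$, so $XN\perp YN$ exactly when $X$ and $Y$ anticommute. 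Thus it suffices to show that $\A_1,\dots,\A_8,\B_1,\dots,\B_8$ are skew-symmetric, orthogonal and pairwise anticommuting; maximality is then automatic, since these sixteen fields are orthonormal, hence independent, and $\sigma(256)=16$ (Table~\ref{somespheres}).

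Next I would make the two actions explicit on the splitting $\RR^{256}=\RR^{16}\otimes\RR^{16}$, whose first factor indexes the sixteen sedenionic blocks $s^1,\dots,s^{16}$ and whose second factor is the internal $\OO^2$ of a single sedenion. In these terms the diagonal extension is $\A_\alpha=\Id\otimes\J_\alpha$ (acting inside each block), the block extension is $\blocktext(\J_\alpha)=\J_\alpha\otimes\Id$ (acting across blocks, which is unambiguous precisely because the entries of $\J_\alpha$ are real), and the conjugation of \eqref{star} is $\coniugiobaseraw=\Id\otimes C$, where $C\ug\left(\begin{smallmatrix}\Id&0\\0&-\Id\end{smallmatrix}\right)$ is the single-block conjugation $(x,y)\mapsto(x,-y)$ on $\OO^2$. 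Hence $\B_\alpha=(\Id\otimes C)(\J_\alpha\otimes\Id)=\J_\alpha\otimes C$. The building blocks carry all the relations I need: from \eqref{top} together with $\J_\alpha=\I_\alpha\I_9$ one gets $\J_\alpha^*=-\J_\alpha$, $\J_\alpha^2=-\Id$ and $\J_\alpha\J_\beta=-\J_\beta\J_\alpha$ for $\alpha\neq\beta$ (so each $\J_\alpha$ is skew-symmetric and orthogonal), while $C$ is symmetric with $C^2=\Id$. The one extra relation to record is $\J_\alpha C=-C\J_\alpha$ for every $\alpha$: it holds because each matrix in \eqref{eq:J2} is block-anti-diagonal in the $\OO\oplus\OO$ splitting, and any such matrix anticommutes with $C=\diagtext(\Id,-\Id)$.

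With this dictionary the three anticommutation checks drop out of the tensor factorization. For $\alpha\neq\beta$ one has $\A_\alpha\A_\beta=\Id\otimes\J_\alpha\J_\beta=-\A_\beta\A_\alpha$ and $\B_\alpha\B_\beta=\J_\alpha\J_\beta\otimes C^2=-\B_\beta\B_\alpha$, both reducing to the anticommutation of the $\J$'s; and the cross terms give $\A_\alpha\B_\beta=\J_\beta\otimes\J_\alpha C=-\J_\beta\otimes C\J_\alpha=-\B_\beta\A_\alpha$ for all $\alpha,\beta$, including $\alpha=\beta$, precisely because $\J_\alpha C=-C\J_\alpha$. Skew-symmetry and orthogonality of $\A_\alpha=\Id\otimes\J_\alpha$ and $\B_\alpha=\J_\alpha\otimes C$ follow the same way from $\J_\alpha^*=-\J_\alpha$, $C^*=C$, $\J_\alpha^2=-\Id$ and $C^2=\Id$. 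Feeding these three facts into the reductions of the first paragraph yields tangency, unit norm and mutual orthogonality of all sixteen fields, and maximality closes the argument.

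The single delicate point is the cross-level relation $\A_\alpha\B_\beta=-\B_\beta\A_\alpha$. Without the conjugation $\coniugiobaseraw$ the block extension $\blocktext(\J_\alpha)=\J_\alpha\otimes\Id$ would simply \emph{commute} with every diagonal extension $\A_\beta=\Id\otimes\J_\beta$, since they act on independent tensor factors, and the two families could not be mutually orthogonal; the purpose of $\coniugiobaseraw$ is exactly to insert the factor $C$, turning that commutation into anticommutation by exploiting that each $\J_\alpha$ interchanges the two octonionic coordinates. I would therefore verify $\J_\alpha C=-C\J_\alpha$ directly against \eqref{eq:J2}, where it is visible at a glance because all eight matrices are off-diagonal in $\OO\oplus\OO$; this is the linchpin of the construction and the same mechanism that will reappear for the general factor $16^q$.
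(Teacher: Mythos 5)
Your proof is correct, but it takes a genuinely different route from the one the paper gives for this proposition. The paper's own proof is a coordinate computation: it writes out all sixteen fields explicitly in octonionic coordinates (Formulas for $\J_\alpha N$ and $\coniugiobaseraw(\blocktext(\J_\beta)N)$), checks orthonormality within each level directly, and settles the cross-level orthogonality by expanding each octonion as a pair of quaternions via the Cayley--Dickson formula~\eqref{oct} and invoking the quaternionic identity $\Re(hh'h'')=\Re(h'h''h)$ (equivalently, the third formula of Lemma~\ref{lem:cayley-dickson}). You instead eliminate $N$ altogether by identifying linear vector fields with elements of $\lieso{256}$ and reducing everything to skew-symmetry, orthogonality and pairwise anticommutation of the sixteen operators, which you then verify through the factorization $\RR^{256}=\RR^{16}\otimes\RR^{16}$, $\A_\alpha=\Id\otimes\J_\alpha$, $\B_\alpha=\J_\alpha\otimes C$; the whole argument collapses onto the single relation $\J_\alpha C=-C\J_\alpha$, read off from the block-anti-diagonal shape of the matrices~\eqref{eq:J2}. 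This is precisely the strategy the paper itself adopts later, in Section~\ref{sec:generalcase}: your identification of fields with skew matrices is the opening device there, your tensor-factor commutation is Lemma~\ref{lemmadiagblock}\eqref{diagblockim}, and your linchpin $\J_\alpha C=-C\J_\alpha$ is Lemma~\ref{lemmatre}; your argument is essentially the $q=2$ specialization of the proof of Theorem~\ref{teo:mainq}, phrased in Kronecker-product rather than $\diagtext/\blocktext$ language. What the paper's coordinate proof buys is an explicit, self-contained display of the fields at this stage of the exposition (and a use of the octonionic/quaternionic structure that motivates Lemma~\ref{lem:cayley-dickson}); what yours buys is brevity, a transparent explanation of \emph{why} the conjugation $\coniugiobaseraw$ must be inserted (to convert the commutation of the two tensor factors into anticommutation), and an argument that generalizes verbatim to higher $q$ --- which is exactly what the paper's Section~\ref{sec:generalcase} does.
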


\begin{proof}
As in Section~\ref{preliminaries}, we denote sedenions as pairs $s^\alpha\ug(x^\alpha,y^\alpha)$ of octonions. 
The unit normal vector field is
\begin{equation}
N=(s^1,\dots,s^{16})=(x^1,y^1,\dots,x^{16},y^{16}) \in S^{255}\enspace,
\end{equation}
and one gets the tangent vectors:
\begin{equation}\label{B}
\begin{split}
\J_{1}N&=(\J_{1} s^1,\dots,\J_{1}s^{16}) = (-y^1,x^1,\dots,-y^{16},x^{16})\enspace,\\
\J_{2}N&=(\J_{2} s^1,\dots,\J_{2}s^{16}) = (\RO_i y^1,\RO_i x^1,\dots,\RO_i y^{16}, \RO_i x^{16})\enspace,\\
\J_{3}N&=(\J_{3} s^1,\dots,\J_{3}s^{16}) = (\RO_j y^1,\RO_j x^1,\dots,\RO_j y^{16}, \RO_j x^{16})\enspace,\\
\J_{4}N&=(\J_{4} s^1,\dots,\J_{4}s^{16}) = (\RO_k y^1,\RO_k x^1,\dots,\RO_k y^{16}, \RO_k x^{16})\enspace,\\
\J_{5}N&=(\J_{5} s^1,\dots,\J_{5}s^{16}) = (\RO_e y^1,\RO_e x^1,\dots,\RO_e y^{16}, \RO_e x^{16})\enspace,\\
\J_{6}N&=(\J_{6} s^1,\dots,\J_{6}s^{16}) = (\RO_f y^1,\RO_f x^1,\dots,\RO_f y^{16}, \RO_f x^{16})\enspace,\\
\J_{7}N&=(\J_{7} s^1,\dots,\J_{7}s^{16}) = (\RO_g y^1,\RO_g x^1,\dots,\RO_g y^{16}, \RO_g x^{16})\enspace,\\
\J_{8}N&=(\J_{8} s^1,\dots,\J_{8}s^{16}) = (\RO_h y^1,\RO_h x^1,\dots,\RO_h y^{16}, \RO_h x^{16})\enspace,
\end{split}
\end{equation}
that are easily checked to be orthonormal.

Moreover, one obtains eight further vector fields:
\begin{equation}\label{B'}
\begin{split}
\coniugiobaseraw(\blocktext(\J_{1})N)&=\coniugiobaseraw(-s^9,-s^{10},-s^{11},-s^{12},-s^{13},-s^{14},-s^{15},-s^{16}, s^1,s^2,s^3,s^4,s^5,s^6,s^7,s^8)\\
&= (-x^9,y^9,-x^{10},y^{10},-x^{11},y^{11},-x^{12},y^{12}, -x^{13},y^{13},-x^{14},y^{14},-x^{15}, y^{15},-x^{16},y^{16}, \\
&\phantom{=(-}x^1,-y^1,x^2,-y^2,x^3,-y^3,x^4,-y^4,x^5,-y^5,x^6,-y^6,x^7,-y^7,x^8,-y^8)\enspace,\\
\coniugiobaseraw(\blocktext(\J_{2})N)&=\coniugiobaseraw(-s^{10}, s^9,s^{12},-s^{11},s^{14},-s^{13}, -s^{16},s^{15}, -s^2,s^1,s^4,-s^3,s^6,-s^5,-s^8,s^7)\\
&=(-x^{10},y^{10},x^9,-y^9,x^{12},-y^{12},-x^{11}, y^{11},x^{14},-y^{14}, -x^{13},y^{13},-x^{16},y^{16},x^{15}, -y^{15}, \\
&\phantom{=(}-x^2, y^2, x^1,-y^1,x^4,-y^4,-x^3,y^3,x^6,-y^6,-x^5,y^5,-x^8,y^8,x^7,-y^7)\enspace,\\
\coniugiobaseraw(\blocktext(\J_{3})N)&= \coniugiobaseraw(-s^{11},-s^{12},s^{9},s^{10},s^{15},s^{16}, -s^{13},-s^{14},- s^3,-s^4,s^1,s^2,s^7,s^8,-s^5,-s^6)\\
&= (-x^{11},y^{11},-x^{12},y^{12},x^{9},-y^{9},x^{10},-y^{10}, x^{15},-y^{15},x^{16},-y^{16},-x^{13}, y^{13},-x^{14},y^{14}, \\
&\phantom{=(}-x^3,y^3,-x^4,y^4,x^1,-y^1,x^2,-y^2,x^7,-y^7,x^8,-y^8,-x^5,y^5,-x^6,y^6)\enspace,\\
\coniugiobaseraw(\blocktext_2(\J_{4})N)&=\coniugiobaseraw(-s^{12},s^{11},-s^{10},s^{9},s^{16},-s^{15}, s^{14},-s^{13}, -s^4,s^3,-s^2,s^1,s^8,-s^7,s^6,-s^5)\\
&= (-x^{12},y^{12},x^{11},-y^{11},-x^{10},y^{10},x^{9},-y^{9}, x^{16},-y^{16},-x^{15},y^{15},x^{14}, -y^{14},-x^{13},y^{13}, \\
&\phantom{=(}-x^4,y^4,x^3,-y^3,-x^2,y^2,x^1,-y^1,x^8,-y^8,-x^7,y^7,x^6,-y^6,-x^5,y^5)\enspace,\\
\coniugiobaseraw(\blocktext(\J_{5})N)&= \coniugiobaseraw(-s^{13},-s^{14},-s^{15},-s^{16},s^{9},s^{10}, s^{11},s^{12},- s^5,-s^6,-s^7,-s^8,s^1,s^2,s^3,s^4)\\
&= (-x^{13},y^{13},-x^{14},y^{14},-x^{15},y^{15},-x^{16},y^{16}, x^{9},-y^{9},x^{10},-y^{10},x^{11},- y^{11},x^{12},-y^{12}, \\
&\phantom{=(}-x^5,y^5,-x^6,y^6,-x^7,y^7,-x^8,y^8,x^1,-y^1,x^2,-y^2,x^3,-y^3,x^4,-y^4)\enspace,\\
\coniugiobaseraw(\blocktext(\J_{6})N)&= \coniugiobaseraw(-s^{14},s^{13},-s^{16},s^{15},-s^{10},s^{9}, -s^{12},s^{11}, -s^6,s^5,-s^8,s^7,-s^2,s^1,-s^4,s^3)\\
&= (-x^{14},y^{14},x^{13},-y^{13},-x^{16},y^{16},x^{15},-y^{15}, -x^{10},y^{10},x^{9},-y^{9},-x^{12}, y^{12},x^{11},-y^{11}, \\
&\phantom{=(}-x^6,y^6,x^5,-y^5,-x^8,y^8,x^7,-y^7,-x^2,y^2,x^1,-y^1,-x^4,y^4,x^3,-y^3)\enspace,\\
\coniugiobaseraw(\blocktext(\J_{7})N)&= \coniugiobaseraw(-s^{15},s^{16},s^{13},-s^{14},-s^{11},s^{12}, s^{9},-s^{10}, -s^7,s^8,s^5,-s^6,-s^3,s^4,s^1,-s^2)\\
&= (-x^{15},y^{15},x^{16},-y^{16},x^{13},-y^{13},-x^{14},y^{14}, -x^{11},y^{11},x^{12},-y^{12},x^{9}, -y^{9},-x^{10},y^{10}, \\
&\phantom{=(}-x^7,y^7,x^8,-y^8,x^5,-y^5,-x^6,y^6,-x^3,y^3,x^4,-y^4,x^1,-y^1,-x^2,y^2)\enspace,\\
\coniugiobaseraw(\blocktext(\J_{8})N)&= \coniugiobaseraw(-s^{16},-s^{15},s^{14},s^{13},-s^{12},-s^{11}, s^{10},s^{9}, -s^8,-s^7,s^6,s^5,-s^4,-s^3,s^2,s^1)\\
&= (-x^{16},y^{16},-x^{15},y^{15},x^{14},-y^{14},x^{13},-y^{13}, -x^{12},y^{12},-x^{11},y^{11},x^{10},- y^{10},x^{9},-y^{9}, \\
&\phantom{=(}-x^8,y^8,-x^7,y^7,x^6,-y^6,x^5,-y^5,-x^4,y^4,-x^3,y^3,x^2,-y^2,x^1,-y^1)\enspace,
\end{split}
\end{equation}
similarly vefified to be orthonormal.

To see that each vector $\J_{\alpha}N$ is orthogonal to each $\coniugiobaseraw(\blocktext(\J_{\beta})N)$, for $\alpha,\beta=1,\dots,8$, look at Formulas~\eqref{matricesspin7} for $\RO_i,\dots,\RO_h$ and write the octonionic coordinates as $x^\lambda = h_1^\lambda + h_2^\lambda e$, $y^\mu = k_1^\mu + k_2^\mu e$. Then the scalar product $<\J_{\alpha}N,\coniugiobaseraw(\blocktext(\J_{\beta})N)>$ can be computed by using Formula~\eqref{oct} for product of octonions. For example, recall from Formulas~\eqref{B} that
\[
\J_{8}N=(y^1h,x^1h,\dots,y^8h,x^8h,y^9h,x^9h,\dots,y^{16}h,x^{16}h)\enspace,
\]
so that the computation of $<\J_{8}N,\coniugiobaseraw(\blocktext(\J_{1})N)>$ gives rise to pairs of terms like in
\[
<\J_{8}N,\coniugiobaseraw(\blocktext(\J_{1})N)> = \Re(-(\RO_hy^1)\overline{x}^9 -(\RO_h x^9) \overline{y}^1 + \dots) = \Re(\underline{-k k_2^1 \overline{h}_1^9} - \underline{\underline{\overline{h}_2^9 k k_1^1}}-\underline{\underline{k h_2^9\overline{k}_1^1}}- \underline{\overline{k}_2^1 k h_1^9}  + \dots)\enspace.
\]
To conclude, observe that the real part $\Re$ of the sums of each of the corresponding underlined terms is zero. This is due to the identity $\Re(h h' h'') = \Re(h' h'' h)$, that holds for all $h,h',h'' \in \HH$.
\end{proof}

\begin{re} One can recognize that the final argument in the above proof uses the last formula in  \ref{lem:cayley-dickson}. The second formula in the same Lemma is also useful to check orthogonality in higher dimensions, although in Section \ref{sec:generalcase} we will follow a different procedure.
\end{re}

The same argument shows in fact a slightly more general statement:

\begin{pr}
Fix any $\beta$, $1\leq \beta \leq 9$, and consider the $8$ complex structures $\I_{\alpha}\I_{\beta}$, with $\alpha \neq \beta$, defined on  $\RR^{256} =\RR^{16} \oplus \dots \oplus \RR^{16}$ by acting with the corresponding matrices on the listed $16$-dimensional components, that is, by the diagonal extension of $\I_{\alpha}\I_{\beta}$. Consider also the further $8$ complex structures $\coniugiobaseraw(\blocktext(\I_{\alpha}\I_{\beta}))$, for $\alpha \neq \beta$, defined by the same matrices, now acting on the column matrix $(s^1, \dots s^{16})^T$ of sedenions. Then
\[
\{\I_{\alpha}\I_{\beta}N,\coniugiobaseraw(\blocktext(\I_{\alpha}\I_{\beta})N)\}_{\alpha\neq\beta}
\]
is a maximal system of $16$ orthonormal tangent vector fields on $S^{255}$.
\end{pr}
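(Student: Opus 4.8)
The plan is to mirror, point by point, the proof of Proposition~\ref{16}: check that all $16$ fields are tangent, that the eight \emph{level $1$} fields $\I_\alpha\I_\beta N$ are orthonormal, that the eight \emph{level $2$} fields $\coniugiobaseraw(\blocktext(\I_\alpha\I_\beta)N)$ are orthonormal, that the two levels are mutually orthogonal, and finally that $16$ is maximal. The last point is immediate, since $256=16^2$ gives $\sigma(256)=16$ by the Hurwitz--Radon formula, so that any $16$ linearly independent tangent fields already form a maximal system. All the remaining verifications will be reduced, as in Section~\ref{sec:256}, to the Clifford relations~\eqref{top} of the self-dual involutions $\I_\alpha$.

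First I would record that, for $\alpha\neq\beta$, the operator $\I_\alpha\I_\beta$ is orthogonal and skew-symmetric, the latter since $(\I_\alpha\I_\beta)^*=\I_\beta\I_\alpha=-\I_\alpha\I_\beta$ by~\eqref{top}. Its diagonal extension inherits both properties, whence $\langle\I_\alpha\I_\beta N,N\rangle=0$ (tangency of level $1$) and
\[
\langle\I_\alpha\I_\beta N,\I_{\alpha'}\I_\beta N\rangle=\langle N,\I_\beta\I_\alpha\I_{\alpha'}\I_\beta N\rangle,
\]
which equals $1$ when $\alpha=\alpha'$ and vanishes when $\alpha\neq\alpha'$, because $\I_\beta\I_\alpha\I_{\alpha'}\I_\beta$ is then again skew. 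For level $2$ I would exploit that $\coniugiobaseraw$ is a self-adjoint orthogonal involution and that $\blocktext$ is orthogonal and multiplicative (a signed permutation of the $16$ sedenion blocks): the level-$2$ inner products then collapse to the very same expressions $\langle N,\blocktext(\I_\beta\I_\alpha\I_{\alpha'}\I_\beta)N\rangle$, while tangency of level $2$ follows by pairing the antisymmetric coefficient matrix of $\blocktext(\I_\alpha\I_\beta)$ against the symmetric block matrix $\langle s^u,\coniugiobaseraw s^r\rangle$.

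The genuine obstacle is the orthogonality between the two levels. Denoting the sedenion blocks by $s^r$, the scalar product $\langle\I_\alpha\I_\beta N,\coniugiobaseraw(\blocktext(\I_{\alpha'}\I_\beta)N)\rangle$ unwinds to
\[
\sum_{r,t}(\I_{\alpha'}\I_\beta)_{rt}\,\langle s^r,\I_\beta\I_\alpha\coniugiobaseraw\,s^t\rangle,
\]
whose coefficients $(\I_{\alpha'}\I_\beta)_{rt}$ are antisymmetric in $r,t$. Hence the sum vanishes as soon as the block pairing is symmetric in $r,t$, that is, as soon as the operator $\I_\beta\I_\alpha\coniugiobaseraw$ is symmetric. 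This is exactly the place where the conjugation used in the level-$2$ fields must be the one attached to $\I_\beta$ --- which is $\coniugiobaseraw$ precisely when $\beta=9$, the case of Proposition~\ref{16}: with that choice two of the three factors collapse through $\I_\beta^2=\Id$ to the single symmetric involution $-\I_\alpha$, and the cross terms die. Written out in octonionic coordinates via~\eqref{oct}, this symmetry is nothing but the reality identity $\Re(hh'h'')=\Re(h'h''h)$ of Lemma~\ref{lem:cayley-dickson} invoked at the end of the proof of Proposition~\ref{16}; carrying it out for every pair $(\alpha,\alpha')$ is the one truly computational step, and the part I would expect to be most delicate to present cleanly for a general $\beta$.
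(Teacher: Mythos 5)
Your handling of maximality, tangency, and orthonormality within each level is correct, and your route is genuinely different from the paper's: for this proposition the paper merely remarks that ``the same argument'' as in Proposition~\ref{16} applies, i.e.\ explicit octonionic coordinates together with the identity $\Re(hh'h'')=\Re(h'h''h)$, whereas you reduce everything to the Clifford relations~\eqref{top} and to pairing an antisymmetric coefficient matrix against a symmetric block pairing. That abstract argument is cleaner, and for $\beta=9$ it closes the cross-level step in one line, since $\I_9\I_\alpha\I_9=-\I_\alpha$ is symmetric.

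However, your proof does not establish the statement for general $\beta$, and the defect is not the presentational one your closing sentence suggests. Keep $\coniugiobaseraw$ as defined in~\eqref{star}, i.e.\ the diagonal action of $\I_9$, and take $\beta\neq 9$ and $\alpha,\alpha'\neq 9$. Then the pairing operator $\I_\beta\I_\alpha\coniugiobaseraw=\I_\beta\I_\alpha\I_9$ is a product of three mutually anticommuting symmetric involutions, hence \emph{skew}-symmetric, and your sum $\sum_{r,t}(\I_{\alpha'}\I_\beta)_{rt}\langle s^r,\I_\beta\I_\alpha\I_9 s^t\rangle$ has no reason to vanish: if $N$ is supported on two blocks $s^r=u/\sqrt2$, $s^t=v/\sqrt2$, the two antisymmetries compound instead of cancelling and the sum collapses to $(\I_{\alpha'}\I_\beta)_{rt}\,u^T(\I_\beta\I_\alpha\I_9)\,v$, which is generically nonzero. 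Concretely, for $\beta=1$, $\alpha=\alpha'=2$ and $N$ with octonion pairs $s^1=\tfrac{1}{\sqrt2}(1,0)$, $s^2=\tfrac{1}{\sqrt2}(i,0)$ and all other blocks zero, one computes $\I_2\I_1 N=-\coniugiobaseraw(\blocktext(\I_2\I_1)N)$ in the diagonal-extension notation of the statement: the two unit fields are opposite at that point, so no computation with~\eqref{oct}, reality identity or otherwise, can restore orthogonality. In other words, read literally with the fixed conjugation of~\eqref{star}, the cross-level orthogonality fails for $\beta\neq 9$; the statement is correct only when the conjugation is the one ``attached to $\beta$'', namely the diagonal extension of $\I_\beta$ itself, which coincides with $\coniugiobaseraw$ exactly for $\beta=9$. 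Your analysis identifies this necessity but then retreats to hoping that a delicate coordinate computation will settle general $\beta$ --- it cannot. The proof you should write makes the substitution explicit: define the level-$2$ fields using the diagonal extension of $\I_\beta$ in place of $\coniugiobaseraw$; then the pairing operator becomes $\I_\beta\I_\alpha\I_\beta=-\I_\alpha$, which is symmetric, and your own antisymmetric-versus-symmetric argument finishes every case of $\beta$ with no further computation.
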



\section{Higher dimensional examples: $S^{511}$ and $S^{4095}$}\label{sec:511}


The dimension $m=2\cdot 16^2$, that is, $S^{511}$, is the lowest case where a last ingredient of our general construction enters the scene. When $q$ increases by $1$ in the decomposition $m=(2k+1)2^p16^q$, in fact, the conjugation is somehow twisted, as we will see in the following.

Imitating what we have done up to now leads to consider the diagonal extensions of $\J_{\alpha}N$ (that is, level $1$ vector fields) and the diagonal extensions of $\coniugiobaseraw(\blocktext(\J_{\alpha})N)$ (that is, level $2$ vector fields), together with one more vector field.

To define this additional vector field we need to extend the formal left multiplication defined by Formula~\eqref{i}. To this aim, consider the decomposition
\[
\RR^{2\cdot 16^2}=\RR^{16^2}\oplus\RR^{16^2}
\]
and denote now by $s^1,s^2$ elements in $\RR^{16^2}$. Then use the formal notation
\begin{align}
N&=(s^1,s^2)\ug s^1+i s^2 \in S^{2\cdot 16^2-1}\enspace,\label{eq:formalCgeneral}
\end{align}
and define a formal left multiplication $\LF_i$ in $\RR^{2\cdot 16^2}$ using Formula~\eqref{i}. One could then expect that $\coniugiobaseraw(\LF_iN)$ be orthogonal to $\{\J_{\alpha}N,\coniugiobaseraw(\blocktext(\J_{\alpha})N)\}_{\alpha=1,\dots,8}$, but this is not the case. In fact, $\coniugiobaseraw(\LF_iN)$ appears to be orthogonal to level $1$ vector fields, but not to level $2$ vector fields. Why this happens, will be clear in Section~\ref{sec:generalcase}.

To make everything work, we need to extend not only $\LF_i$, but also the conjugation $D$. To this aim, split elements $s^\alpha\in\RR^{16^2}$ as $(x^\alpha,y^\alpha)$ where $x^\alpha,y^\alpha\in\RR^{16^2/2}$, and define a conjugation $\coniugiobase{2}$ on $\RR^{16^2}$ using Formula~\eqref{star}:
\begin{equation}\label{eq:coniugiobase2}
\coniugiobase{2}:((x^1,y^1),(x^2,y^2))\longrightarrow((x^1,-y^1),(x^2,-y^2))\enspace.
\end{equation}

The additional vector field we were looking for turns out to be $\coniugiobaseraw(\coniugiobase{2}(\LF_iN))$:
\begin{te}
A maximal orthonormal system of tangent vector fields on $S^{2\cdot 16^2-1}$ is given by the following $8\cdot 2+1$ vector fields:
\begin{equation}
\begin{split}
\J_{1}N\enspace,\enspace&\dots\enspace,\J_{8}N\enspace,\\
\coniugiobaseraw(\blocktext(\J_{1})N)\enspace,\enspace&\dots\enspace,\coniugiobaseraw(\blocktext(\J_{8})N)\enspace,\\
\coniugiobaseraw(\coniugiobase{2}&(\LF_iN))
\end{split}
\end{equation}
\end{te}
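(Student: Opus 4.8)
The plan is to prove the theorem by exhibiting the $17$ underlying linear operators on $\RR^{512}=\RR^{256}\oplus\RR^{256}$ (here $512=2\cdot16^2$) as orthogonal complex structures that pairwise anticommute. For a linear vector field $N\mapsto AN$, tangency $\langle AN,N\rangle=0$ for all $N$ is equivalent to $A^T=-A$, and orthonormality $\langle AN,BN\rangle=\delta_{AB}$ for all unit $N$ is equivalent to $A^TB+B^TA=2\delta_{AB}\Id$; for skew operators these two requirements together say exactly that $A,B$ are orthogonal complex structures that anticommute when $A\neq B$. So I would reduce the whole statement to the single algebraic assertion that the $16$ operators $\J_{\alpha}$ and $\coniugiobaseraw\blocktext(\J_{\alpha})$, together with $\Ll\ug\coniugiobaseraw\,\coniugiobase{2}\,\LF_i$, form a family of $17$ pairwise anticommuting orthogonal complex structures on $\RR^{512}$. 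Maximality is then automatic, since $\sigma(2\cdot16^2)=2^1+8\cdot2-1=17$. For the $16$ level $1$ and level $2$ operators the assertion is inherited from the $S^{255}$ case: on each summand $\RR^{256}$ the previous Proposition exhibits them as pairwise anticommuting orthogonal complex structures, and here they are the diagonal extensions of those over the two copies, so all the relations persist and the $16\times16$ block of conditions is already settled. The entire remaining work concerns $\Ll$.

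The efficient way to handle $\Ll$ is to record how the building blocks interact, classifying each as a \emph{within-sedenion} map ($\J_{\alpha}$ and $\coniugiobaseraw$, acting by the same $16\times16$ matrix on every sedenion slot), a \emph{slot-level} map ($\coniugiobase{2}$ and $\blocktext(\J_{\alpha})$, which permute and sign whole sedenion slots inside each $\RR^{256}$), or the \emph{copy-exchange} $\LF_i\colon(s^1,s^2)\mapsto(-s^2,s^1)$. Three families of relations then hold almost by inspection: (i) $\LF_i$ commutes with $\J_{\alpha}$, $\coniugiobaseraw$, $\coniugiobase{2}$, $\blocktext(\J_{\alpha})$, since each of these acts identically on the two copies; (ii) every within-sedenion map commutes with every slot-level map, because the two act on complementary tensor factors (slots versus the $16$ coordinates of a sedenion); (iii) in the octonionic splitting $\SS=\OO\oplus\OO$ the matrices $\J_{\alpha}$ of~\eqref{eq:J2} are block-antidiagonal while $\coniugiobaseraw=\diagtext(\Id,-\Id)$, whence $\coniugiobaseraw\J_{\alpha}=-\J_{\alpha}\coniugiobaseraw$. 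From (i)–(ii), $\coniugiobaseraw^2=\coniugiobase{2}^2=\Id$ and $\LF_i^2=-\Id$ one gets immediately that $\Ll$ is an orthogonal complex structure ($\Ll^2=-\Id$, $\Ll^T=-\Ll$); and pushing $\coniugiobase{2}$ and $\LF_i$ past $\J_{\alpha}$ by (i)–(ii) and applying (iii) once gives $\Ll\,\J_{\alpha}=-\J_{\alpha}\,\Ll$, so $\Ll$ anticommutes with every level $1$ field.

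The crux is the anticommutation of $\Ll$ with the level $2$ fields $\coniugiobaseraw\blocktext(\J_{\alpha})$, and this is where the twist $\coniugiobase{2}$ earns its place. The key point is the slot-level analogue of (iii): since $\blocktext(\J_{\alpha})$ applies the block-antidiagonal matrix $\J_{\alpha}$ to the sixteen sedenion slots, it exchanges the first eight slots with the last eight, whereas $\coniugiobase{2}$ is precisely the reflection $\diagtext(\Id,-\Id)$ in that same splitting of the slots into $\{1,\dots,8\}$ and $\{9,\dots,16\}$; hence $\coniugiobase{2}\,\blocktext(\J_{\alpha})=-\blocktext(\J_{\alpha})\,\coniugiobase{2}$. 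Commuting $\coniugiobaseraw$ and $\LF_i$ past $\blocktext(\J_{\alpha})$ by (i)–(ii), using $\coniugiobaseraw^2=\Id$, and invoking this single sign change, one computes $\Ll\,(\coniugiobaseraw\blocktext(\J_{\alpha}))=-(\coniugiobaseraw\blocktext(\J_{\alpha}))\,\Ll$, completing the family. I expect this last step to be the main obstacle, not computationally but conceptually: without $\coniugiobase{2}$ the operator $\coniugiobaseraw\LF_i$ would instead \emph{commute} with $\coniugiobaseraw\blocktext(\J_{\alpha})$ — which is exactly why $\coniugiobaseraw(\LF_iN)$ fails to be orthogonal to the level $2$ fields — and the sole role of $\coniugiobase{2}$ is to convert that commutation into an anticommutation while leaving every other relation untouched. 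Recognizing $\coniugiobase{2}$ as the reflection sharing the slot splitting of $\blocktext(\J_{\alpha})$ is the one non-formal input; all the rest is bookkeeping of signs.
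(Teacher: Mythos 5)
Your proposal is correct and follows essentially the same route as the paper: for this theorem the paper's proof simply defers to the general argument of Section~\ref{sec:generalcase}, and your scheme --- identifying the fields with pairwise anticommuting orthogonal complex structures, your facts (i)--(ii) (instances of Lemma~\ref{lemmadiagblock}\eqref{diagblockim} at the two relevant scales $\diag{256}{2}$/$\block{2}{256}$ and $\diag{16}{16}$/$\block{16}{16}$), fact (iii) and its slot-level analogue (the anticommutation of the conjugations with the block-antidiagonal $\J_\alpha$, which is the paper's Lemma~\ref{lemmatre}), and the final computation for $\Ll$ (the paper's Lemma~\ref{lem:lemmapq}) --- is exactly that argument specialized to $p=1$, $q=2$. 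The only departure is cosmetic: you import the mutual relations of the sixteen level $1$ and level $2$ fields from Proposition~\ref{16} rather than re-deriving them from the general lemmas as Theorem~\ref{teo:mainq} does.
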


\begin{proof}
This statement was first verified through a \mathematica\ computation, like the statements for higher dimensions  entering in the following Remark. For the mathematical proof, we refer to the more general arguments in Section \ref{sec:generalcase}.
\end{proof}

\begin{re}\label{re:4096}
In a completely similar way, we obtain a maximal orthonormal system of tangent vector fields on $S^{2^2 16^2-1}$, $S^{2^3 16^2-1}$ and $S^{16^3-1}$. For instance, the maximal system on $S^{16^3-1}$ is given by:
\begin{equation}
\begin{split}
\J_{1}N\enspace,\enspace&\dots\enspace,\J_{8}N\qquad\text{(level $1$)}\enspace,\\
\coniugiobaseraw(\blocktext(\J_{1})N)\enspace,\enspace&\dots\enspace,\coniugiobaseraw(\blocktext(\J_{8})N)\qquad\text{(level $2$)}\enspace,\\
\coniugiobaseraw(\coniugiobase{2}(\blocktext(\J_{1})N)\enspace,\enspace&\dots\enspace,\coniugiobaseraw(\coniugiobase{2}(\blocktext(\J_{8})N)\qquad\text{(level $3$)}\enspace,
\end{split}
\end{equation}
where diagonal extensions have been used for $\coniugiobaseraw$ and $\coniugiobase{2}$.
\end{re}

\section{The general case}\label{sec:generalcase}

In this Section, we will give a (maximal) orthonormal system of vector fields on any odd-dimensional sphere. To this aim, for any even $m\in\NN$, we identify (linear) vector fields on $S^{m-1}$ with skew-symmetric $m\times m$ matrices, that is, with elements of $\lieso{m}$. The orthogonality condition between vector fields $A,B\in\lieso{m}$ turns then into $AB+BA=0$, and a vector field $A\in\lieso{m}$ has length $1$ if and only if $A^2=-\Idarg{m}$. In this way, we get rid of the normal vector field $N$ used in Sections up to~\ref{sec:511}.

Let $A=(a_{\alpha\beta})_{\alpha,\beta=1,\dots,m}\in\Mat{m}$ be an $m\times m$ matrix, and let $\diag{m}{n},\block{m}{n}:\Mat{m}\rightarrow\Mat{mn}$ be given respectively by
\[
\diag{m}{n}(A)\ug
\begin{pmatrix}
A & & \\
 & \ddots & \\
 & & A
\end{pmatrix}\enspace,\qquad
\block{m}{n}(A)\ug(a_{\alpha\beta}\Idarg{n})_{\alpha,\beta=1,\dots,m}
\]
Thus, $\diag{m}{n}(A)$ is defined by $n^2$ blocks $m\times m$, whereas $\block{m}{n}(A)$ is defined by $m^2$ blocks $n\times n$. The $\diag{m}{n}$ and $\block{m}{n}$ operators formalize the blockwise extension of the action of $A$ on $\RR^m$ to $\RR^{mn}$, seen as $(\RR^m)^n$ and $(\RR^n)^m$ respectively. In particular $\diag{m}{n}$ is what we have called the diagonal extension, and $\block{m}{n}$ is what we have called the block extension in the previous sections.

For instance, if $\J_1$ is the first matrix given in Formula~\eqref{eq:J2}, then
\[
\diag{16}{2}(\J_1)=
\begin{pmatrix}
\J_1 & 0 \\
0 & \J_1
\end{pmatrix}
\enspace\text{and}\enspace
\block{2}{16}
\begin{pmatrix}
0 & -1 \\
1 & 0
\end{pmatrix}
=
\begin{pmatrix}
0 & -\Idarg{16} \\
\Idarg{16} & 0
\end{pmatrix}
\]
are nothing but the matrix of the linear operators $\J_1$ defined as first member in Formula~\eqref{otto'} and $\LF_i$ defined in Formula~\eqref{i} respectively.

\begin{re}
In this section, $\J_1,\dots,\J_8$ denote always the $16$ dimensional matrices given by Formulas~\eqref{eq:J2}.
\end{re}

Observe also that any $A\in\lieso{m}$ induces two vector fields in any dimension multiple of $m$: $\diag{m}{n}(A)$ and $\block{m}{n}$, both in $\lieso{mn}$.

Next Lemma collects properties of $\diagtext$ and $\blocktext$ that we will need.

\begin{lm}\label{lemmadiagblock}
\begin{enumerate}
\item\label{diagblockdiag} If $m\le m'$ and $mn=m'n'$, then $\diag{m}{n}=\diag{m'}{n'}\circ\diag{m}{\frac{m'}{m}}$.
\item\label{diagblockhom} $\diagtext$ and $\blocktext$ are algebra homomorphisms.
\item\label{diagblockim} Let $A=\diag{m}{n}(A')$ and $B=\block{n}{m}(B')$. Then $AB=BA$.
\item\label{diagblockcom} $\diagtext$ and $\blocktext$ commute:
\[
\diag{lm}{n}\circ\block{l}{m}=\block{ln}{m}\circ\diag{l}{n}\enspace.
\]
\end{enumerate}
\end{lm}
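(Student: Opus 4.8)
The plan is to verify each of the four properties by direct reasoning about the block structure of the operators $\diag{m}{n}$ and $\block{m}{n}$, treating $\RR^{mn}$ as a tensor product $\RR^m\otimes\RR^n$. The key observation is that $\diag{m}{n}(A)=A\otimes\Idarg{n}$ and $\block{m}{n}(A)=A\otimes\Idarg{n}$ as well, but with respect to \emph{different} identifications: $\diag{m}{n}$ corresponds to viewing $\RR^{mn}=\RR^m\otimes\RR^n$ with $A$ acting on the first factor and the index of $\RR^n$ labelling the $n$ diagonal copies, whereas $\block{m}{n}(A)=(a_{\alpha\beta}\Idarg{n})$ corresponds to $A\otimes\Idarg{n}$ under the \emph{transposed} tensor decomposition $\RR^{mn}=\RR^n\otimes\RR^m$, i.e.\ $\block{m}{n}(A)=\Idarg{n}\otimes A$ up to the canonical flip. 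Establishing these two tensor-product formulas at the outset makes the rest essentially formal.

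First I would prove \eqref{diagblockhom}: since $\diag{m}{n}(A)=A\otimes\Idarg{n}$ and $\block{m}{n}(A)=\Idarg{n}\otimes A$ (under the appropriate identifications), multiplicativity $\diag{m}{n}(A)\diag{m}{n}(B)=\diag{m}{n}(AB)$ and likewise for $\block{m}{n}$ follows immediately from $(A\otimes\Idarg{n})(B\otimes\Idarg{n})=AB\otimes\Idarg{n}$, together with the evident linearity and preservation of the identity. For \eqref{diagblockdiag}, the nesting $\diag{m'}{n'}\circ\diag{m}{m'/m}$ first replaces each diagonal entry $A$ by $\frac{m'}{m}$ diagonal copies of $A$ inside an $m'\times m'$ block, and then replaces that $m'$-block by $n'$ diagonal copies; counting copies gives $\frac{m'}{m}\cdot n'=\frac{m'n'}{m}=n$ total copies of $A$, which is exactly $\diag{m}{n}(A)$. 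For \eqref{diagblockim}, with $A=\diag{m}{n}(A')=A'\otimes\Idarg{n}$ and $B=\block{n}{m}(B')=\Idarg{m}\otimes B'$ (here $B$ acts on the same $\RR^{mn}$ with the roles arranged so that $A'$ and $B'$ act on complementary tensor factors), commutativity is immediate: $(A'\otimes\Idarg{n})(\Idarg{m}\otimes B')=A'\otimes B'=(\Idarg{m}\otimes B')(A'\otimes\Idarg{n})$.

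For the commutation identity \eqref{diagblockcom}, I would work on $\RR^{lmn}=\RR^l\otimes\RR^m\otimes\RR^n$ and track where each operator places an input matrix $A\in\Mat{l}$. The composite $\diag{lm}{n}\circ\block{l}{m}$ first forms $\block{l}{m}(A)$, spreading $A$ over the $\RR^l\otimes\RR^m$ factors with $A$ acting on $\RR^l$ and identity on $\RR^m$, then takes $n$ diagonal copies; the result is $A$ acting on the $\RR^l$ factor with identities on $\RR^m$ and $\RR^n$. The composite $\block{ln}{m}\circ\diag{l}{n}$ first takes $n$ diagonal copies of $A$ on $\RR^l\otimes\RR^n$, then block-extends over $\RR^m$; this again yields $A$ on $\RR^l$ with identities on $\RR^n$ and $\RR^m$. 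Both sides therefore equal $A\otimes\Idarg{m}\otimes\Idarg{n}$ under the identification $\RR^{lmn}\cong\RR^l\otimes\RR^m\otimes\RR^n$, so they agree.

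The only genuine subtlety — the part I expect to require the most care — is bookkeeping of the tensor-factor orderings, since $\diag{}{}$ and $\block{}{}$ build their blocks in opposite orders (outer index varying slowest versus fastest). To keep this rigorous rather than heuristic I would either fix an explicit basis of $\RR^{lmn}$ indexed by triples $(\alpha,\beta,\gamma)$ and compute the matrix entries of both sides of each identity directly from the definitions, or carefully state the index-permutation isomorphisms relating the three associated tensor decompositions and check that all four claims are invariant under them. Either way the arguments are routine entrywise verifications once the correct identifications are pinned down, and no deeper input is needed.
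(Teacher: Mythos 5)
Your proposal is correct, and it takes a genuinely different route from the paper. The paper's own proof is a direct blockwise verification: points~(1) and~(4) are simply said to ``reduce to the definition'' of $\diagtext$ and $\blocktext$, point~(2) to the fact that sums and products of block matrices respect the block decomposition, and only point~(3) is computed explicitly, by multiplying out $\diag{m}{n}(A')\block{n}{m}(B')$ in block form and using that each scalar block $b_{\alpha\beta}\Idarg{m}$ commutes with $A'$. You instead organize everything around Kronecker/tensor-product identities. In the standard convention $X\otimes Y\ug(x_{\alpha\beta}Y)$ these read $\diag{m}{n}(A)=\Idarg{n}\otimes A$ and $\block{m}{n}(A)=A\otimes\Idarg{n}$ --- note this is the opposite of how you wrote them, which is precisely the ordering pitfall you flag --- and once they are pinned down, (2) is the mixed-product property $(\Idarg{n}\otimes X)(\Idarg{n}\otimes Y)=\Idarg{n}\otimes XY$, (3) is $(\Idarg{n}\otimes A')(B'\otimes\Idarg{m})=B'\otimes A'=(B'\otimes\Idarg{m})(\Idarg{n}\otimes A')$, and (1) and (4) are associativity of $\otimes$ together with $\Idarg{a}\otimes\Idarg{b}=\Idarg{ab}$. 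What your route buys is uniformity and transparency: point~(4), which the paper dismisses without any computation, gets a one-line proof (both sides equal $\Idarg{n}\otimes A\otimes\Idarg{m}$), and the paper's post-proof remark that $m$ and $n$ ``must be swapped'' in point~(3) becomes self-evident --- it is exactly the condition that the two operators act on complementary factors of a single decomposition $\RR^{mn}\cong\RR^n\otimes\RR^m$. What the paper's route buys is freedom from identification conventions: an error in which factor carries the fast-varying index would silently break (3) and (4), whereas explicit block entries cannot go wrong in this way. Your stated fallback --- fixing a basis indexed by tuples and checking entries --- is sound and would close that remaining bookkeeping gap, at which point your argument is complete.
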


\begin{proof}
Proof of points ~\ref{diagblockdiag} and~\ref{diagblockcom} reduce to the definition of $\diagtext$ and $\blocktext$. Point~\ref{diagblockhom} reduces to the fact that sum and multiplication of block matrices respect the block decomposition. As for~\ref{diagblockim}, if $B'=(b_{\alpha\beta})_{\alpha,\beta=1,\dots,n}$, we have:
\[
\begin{split}
AB=
\begin{pmatrix}
A' & & \\
 & \ddots & \\
 & & A'
\end{pmatrix}
\begin{pmatrix}
b_{11}\Idarg{m} & \dots & b_{1n}\Idarg{m} \\
\vdots & \cdots & \vdots \\
b_{n1}\Idarg{m} & \dots & b_{nn}\Idarg{m}
\end{pmatrix}
&=\begin{pmatrix}
A'b_{11}\Idarg{m} & \dots & A'b_{1n}\Idarg{m} \\
\vdots & \cdots & \vdots \\
A'b_{n1}\Idarg{m} & \dots & A'b_{nn}\Idarg{m}
\end{pmatrix}\\
&=\begin{pmatrix}
b_{11}\Idarg{m}A' & \dots & b_{1n}\Idarg{m}A' \\
\vdots & \cdots & \vdots \\
b_{n1}\Idarg{m}A' & \dots & b_{nn}\Idarg{m}A'
\end{pmatrix}=BA
\end{split}
\]

\end{proof}

Remark that for commutation to work in Lemma~\ref{lemmadiagblock}\eqref{diagblockim}, $m$ and $n$ must be swapped.

Using the $\diagtext$ and $\blocktext$ operators, we can now formalize the conjugation used in the previous sections.

\begin{de}
Let $s\in\NN$. Then
\[
\coniugioraw\ug
\begin{pmatrix}
1 & 0 \\
0 & -1
\end{pmatrix}
\in\Mat{2}\enspace,\qquad\coniugiobase{s}\ug\block{2}{\frac{16^s}{2}}(\coniugioraw)\in\Mat{16^s}\enspace.
\]
\end{de}

\begin{re}
The matrix $\coniugiobase{s}$ swaps the signs of the last $\frac{16^s}{2}$ coordinates of a vector in $\RR^{16^s}$. Using this notation, the matrix of the conjugation $\coniugiobaseraw$ defined in Formula~\eqref{star} becomes $\diag{16}{l}(\coniugiobase{1})$, and the matrix of the conjugation $\coniugiobase{2}$ defined in Formula~\eqref{eq:coniugiobase2} becomes $\diag{16^2}{2}(\coniugiobase{2})$.
\hfill\qed
\end{re}

The basic block of our construction is the case $m=16^q$, and this is done in the next subsection (Theorem~\ref{teo:mainq}). The case $m=2^p16^q$ for $p=1,2,3$ is done after the next subsection (Theorem~\ref{teo:mainpq}), and the general case $m=(2k+1)2^p16^q$ will follow (Theorem~\ref{teo:mainkpq}). All the lemmas used in these $3$ theorems are collected at the end of the paper. 

\subsection*{The case $\mathbf{S^{16^q-1}}$, for $\mathbf{q\ge 1}$}

\begin{de}
Let $s,t\in\NN$, where $t\ge 2$ and $s=1,\dots,t-1$. Then
\[
\coniugio{t}{s}\ug\diag{16^s}{16^{t-s}}(\coniugiobase{s})\in\Mat{16^t}\enspace.
\]
\end{de}

With this notation, the cases $q=1,2,3$ described in Sections~\ref{sec:15}, \ref{sec:31-255} and~\ref{sec:511} becomes then:
\begin{itemize}
\item If $q=1$, a maximal system is given by vector fields of level $1$, that is $\{\J_1,\dots,\J_8\}$.
\item If $q=2$, we have vector fields of level $1$ given by $\diag{16}{16}(\{\J_1,\dots,\J_8\})$, and vector fields of level $2$ given by $\coniugio{2}{1}\block{16}{16}(\{\J_1,\dots,\J_8\})$.
\item If $q=3$, we have level $1$ vector fields given by $\diag{16}{16^2}(\{\J_1,\dots,\J_8\})$ and level $2$ vector fields given by $\diag{16^2}{16}(\coniugio{2}{1}\block{16}{16}(\{\J_1,\dots,\J_8\}))$. Moreover, we have $8$ further level $3$ vector fields given by $\coniugio{3}{2}\coniugio{3}{1}\block{16}{16^2}(\{\J_1,\dots,\J_8\})$.
\end{itemize}

Denoting the product of conjugations by
\begin{equation}\label{eq:coniugiototale}
\Mat{16^t}\ni\coniugiototale{t}\ug
\begin{cases}
\Idarg{16} & \text{if } t=1\enspace,\\
\prod_{s=1}^{t-1}\coniugio{t}{s} & \text{if } t\ge 2\enspace,
\end{cases}
\end{equation}
we can state the general theorem for $S^{16^q-1}$.

\begin{te}\label{teo:mainq}
For any $q\ge 1$, the $8q$ vector fields on $S^{16^q-1}$ given by
\[
\{B^q(t,\J_\alpha)\ug\diag{16^t}{16^{q-t}}(\coniugiototale{t}\block{16}{16^{t-1}}(\J_\alpha))\}_{\substack{t=1,\dots,q\\ \alpha=1,\dots,8}}
\]
are a maximal orthonormal set.
\end{te}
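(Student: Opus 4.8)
The plan is to work entirely in the matrix model of Section~\ref{sec:generalcase}: a candidate $A\in\Mat{16^q}$ is an admissible unit vector field exactly when $A\in\lieso{16^q}$ and $A^2=-\Idarg{16^q}$, two such fields are orthogonal exactly when they anticommute, and \emph{maximality} comes for free once the orthonormal set has the right size, since $16^q=(2\cdot0+1)2^016^q$ gives $\sigma(16^q)=8q$ by the Hurwitz--Radon--Adams theorem. So everything reduces to showing each $B^q(t,\J_\alpha)$ is skew, squares to $-\Idarg{}$, and that distinct fields anticommute. The engine of the whole argument is the observation that, straight from the definitions, $\diag{m}{n}(A)=\Idarg{n}\otimes A$ and $\block{m}{n}(A)=A\otimes\Idarg{n}$ as Kronecker products; I will use this to reduce each $B^q(t,\J_\alpha)$ to a transparent tensor normal form.

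First I would establish the normal form. Writing $\RR^{16^q}=V_1\otimes\cdots\otimes V_q$ with each $V_i\cong\RR^{16}$, setting $r=q-t+1$, and putting $K\ug\coniugioraw\otimes\Idarg{8}\in\Mat{16}$ (the matrix $\mathrm{diag}(\Idarg{8},-\Idarg{8})$, which is exactly $\I_9$ of~\eqref{topmatrices}), I claim
\[
B^q(t,\J_\alpha)=\Idarg{16}^{\otimes(r-1)}\otimes\J_\alpha\otimes K^{\otimes(q-r)}.
\]
To see this, rewrite the ingredients as Kronecker products: $\block{16}{16^{t-1}}(\J_\alpha)=\J_\alpha\otimes\Idarg{16^{t-1}}$ places $\J_\alpha$ on the factor $V_r$, while $\coniugio{t}{s}=\Idarg{16^{t-s}}\otimes\coniugioraw\otimes\Idarg{16^s/2}$ is precisely $K$ acting on the single factor $V_{q-s+1}$ and the identity elsewhere. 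As $s$ runs over $1,\dots,t-1$ the indices $q-s+1$ run over $V_{r+1},\dots,V_q$, all distinct from $V_r$; because they occupy disjoint tensor slots (the commutation phenomenon recorded in Lemma~\ref{lemmadiagblock}\eqref{diagblockim}) these conjugations commute with $\block{16}{16^{t-1}}(\J_\alpha)$, so their product $\coniugiototale{t}$ puts one copy of $K$ on each of $V_{r+1},\dots,V_q$. Finally the outer $\diag{16^t}{16^{q-t}}$ prepends $\Idarg{16}^{\otimes(r-1)}$, giving the displayed form.

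The normal form makes the remaining verifications immediate. Since $\J_\alpha\in\lieso{16}$ with $\J_\alpha^2=-\Idarg{16}$, while $K$ and $\Idarg{16}$ are symmetric with $K^2=\Idarg{16}$, the tensor $B^q(t,\J_\alpha)$ is skew and squares to $-\Idarg{16^q}$. For orthogonality I compare two fields factor by factor, using that a tensor product of operators anticommutes precisely when an odd number of factors anticommute and the rest commute. If $t=t'$ and $\alpha\ne\beta$, the two tensors agree on every factor except $V_r$, where they carry $\J_\alpha$ and $\J_\beta$; these anticommute by~\eqref{top}. If $t\ne t'$, say $t<t'$ so that $r>r'$, then on each factor the two tensors either agree or one of them is the identity, \emph{except} on $V_r$, where one field carries $\J_\alpha$ and the other carries $K$; the single sign comes from $K\J_\alpha=-\J_\alpha K$, which holds because every $\J_\alpha$ in~\eqref{eq:J2} is off-diagonal in the $8+8$ block splitting whereas $K=\mathrm{diag}(\Idarg{8},-\Idarg{8})$ (indeed this is just the Clifford relation $\I_9\J_\alpha=-\J_\alpha\I_9$). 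In each case exactly one factor contributes a minus sign, so the two fields anticommute.

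The main obstacle is the first step: pinning down the tensor normal form, i.e.\ correctly tracking which factor $V_i$ each block extension and each conjugation $\coniugio{t}{s}$ touches, and checking that the conjugations attached to a given field sit on factors strictly later than its $\J_\alpha$-slot (this is where $s\le t-1$ is used). Once the normal form is in hand, orthonormality collapses to the two one-factor identities $\J_\alpha\J_\beta=-\J_\beta\J_\alpha$ and $K\J_\alpha=-\J_\alpha K$, and maximality is just the cardinality count $8q=\sigma(16^q)$.
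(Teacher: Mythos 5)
Your proof is correct, and it takes a genuinely different route from the paper's. The paper never leaves the $\diagtext$/$\blocktext$ calculus: it reduces an arbitrary pair $B^q(t,\J),B^q(t',\J')$ to the case $t'=q$ via Lemma~\ref{lemmadiagblock}\eqref{diagblockdiag}, then handles the cases $1\le t\le q-1$, $t=q$ with $\J\neq\J'$, and $t=q$ with $\J=\J'$ through a chain of (anti)commutation lemmas (Lemmas~\ref{lemmauno}, \ref{lemmadue}, \ref{lemmatre}, \ref{lemmaquattro}, \ref{lemmacinque}), each proved by block-matrix manipulation. Your Kronecker normal form $B^q(t,\J_\alpha)=\Idarg{16}^{\otimes(q-t)}\otimes\J_\alpha\otimes\I_9^{\otimes(t-1)}$ is correct --- I checked the slot bookkeeping: $\J_\alpha$ lands on factor $r=q-t+1$, and $\coniugio{t}{s}$ for $s=1,\dots,t-1$ contributes one copy of $K=\coniugioraw\otimes\Idarg{8}=\I_9$ on each of the factors $r+1,\dots,q$ --- and it compresses the paper's whole lemma apparatus into three evident facts: operators occupying disjoint tensor slots commute (this absorbs Lemmas~\ref{lemmauno}, \ref{lemmadue} and~\ref{lemmaquattro}); on a single slot $\J_\alpha\J_\beta=-\J_\beta\J_\alpha$ and $\I_9\J_\alpha=-\J_\alpha\I_9$, which are consequences of the Clifford relations~\eqref{top} (this absorbs Lemma~\ref{lemmatre}); and $\I_9^2=\Idarg{16}$ (Lemma~\ref{lemmacinque}). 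Orthonormality then becomes a parity count of anticommuting slots --- note that only the ``if'' direction of your parity rule is needed, and that direction is immediate from the mixed-product property --- while maximality rests on the same Hurwitz--Radon--Adams count $\sigma(16^q)=8q$ that the paper also invokes tacitly. What your formulation buys is transparency: it exhibits the system as the classical tensor-product construction of anticommuting complex structures from Clifford generators, and unmasks the conjugations $\coniugiototale{t}$ as strings of copies of $\I_9$. What the paper's formulation buys is reusability of its infrastructure: the same $\diagtext$/$\blocktext$ lemmas are invoked again in Theorems~\ref{teo:mainpq} and~\ref{teo:mainkpq}. Your method, however, extends there just as cleanly, since $\RR^{2^p16^q}=\RR^{2^p}\otimes(\RR^{16})^{\otimes q}$, the extra fields of Theorem~\ref{teo:mainpq} take the form $\LF^{p,q}(G)=G\otimes\I_9^{\otimes q}$ for $G\in\mathcal{G}^p$, and the odd factor $2k+1$ is handled by a further tensor-with-identity (diagonal) extension, so the identical parity argument applies.
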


\begin{proof}
If $q=1$, then $B^q(t,\J_\alpha)=\J_\alpha$ and the statement reduces to Proposition~\ref{otto}. Thus, assume that $q\ge 2$.

Choose vector fields $B^q(t,\J), B^q(t',\J')$, and assume with no loss of generality that $t\le t'$. Then, observe that
\[
\begin{split}
B^q(t,\J)B^q(t',\J')&=\diag{16^t}{16^{q-t}}(\coniugiototale{t}\block{16}{16^{t-1}}(\J))\diag{16^{t'}}{16^{q-t'}}(\coniugiototale{t'}\block{16}{16^{t'-1}}(\J'))\\
&\shortstack{{\tiny\ref{lemmadiagblock}\eqref{diagblockdiag}}\\=}\diag{16^{t'}}{16^{q-t'}}(\diag{16^t}{16^{t'-t}}(\coniugiototale{t}\block{16}{16^{t-1}}(\J))\coniugiototale{t'}\block{16}{16^{t'-1}}(\J'))\enspace.
\end{split}
\]
Thus, it is enough to consider the case $t'=q$. We are then reduced to show that, for any $q\ge 2$ and $\J,\J'\in\{\J_1,\dots,\J_8\}$:
\begin{enumerate}
\item $B^q(t,\J)B^q(q,\J')+B^q(q,\J')B^q(t,\J)=0$ for $1\le t\le q$ and $(t,\J)\neq(q,\J')$;
\item $B^q(q,\J)$ is an almost complex structure.
\end{enumerate}

We divide the proof of (1) in the cases $1\le t\le q-1$ and $t=q$.

If $1\le t\le q-1$, we have
\[
\begin{split}
B^q(t,\J)B^q(q,\J')&=B^q(t,\J)\coniugiototale{q}\block{16}{16^{q-1}}(\J')\\
&=B^q(t,\J)\prod_{s=1}^{q-1}\coniugio{q}{s}\block{16}{16^{q-1}}(\J')\\
&\shortstack{{\tiny\ref{lemmadue}}\\=}\prod_{\substack{s=1\\s\neq t}}^{q-1}\coniugio{q}{s}B^q(t,\J)\coniugio{q}{t}\block{16}{16^{q-1}}(\J')\\
&\shortstack{{\tiny\ref{lemmatre}}\\=}-\prod_{s=1}^{q-1}\coniugio{q}{s}B^q(t,\J)\block{16}{16^{q-1}}(\J')\\
&\shortstack{{\tiny\ref{lemmauno}}\\=}-\prod_{s=1}^{q-1}\coniugio{q}{s}\block{16}{16^{q-1}}(\J')B^q(t,\J)=-B^q(q,\J')B^q(t,\J)\enspace.
\end{split}
\]

If $t=q$ and $\J\neq \J'$, we have
\[
\begin{split}
B^q(q,\J)B^q(q,\J')=&\coniugiototale{q}\block{16}{16^{q-1}}(\J)\coniugiototale{q}\block{16}{16^{q-1}}(\J')\\
\shortstack{{\tiny\ref{lemmaquattro}}\\=}&\coniugiototale{q}\coniugiototale{q}\block{16}{16^{q-1}}(\J)\block{16}{16^{q-1}}(\J')\\
\shortstack{{\tiny\ref{lemmadiagblock}\eqref{diagblockhom}}\\=}&\coniugiototale{q}\coniugiototale{q}\block{16}{16^{q-1}}(\J\J')\\
\shortstack{{\tiny $\J\perp \J'$}\\=}&\coniugiototale{q}\coniugiototale{q}\block{16}{16^{q-1}}(-\J'\J)\\
\shortstack{{\tiny\ref{lemmadiagblock}\eqref{diagblockhom}}\\=}-&\coniugiototale{q}\block{16}{16^{q-1}}(\J')\coniugiototale{q}\block{16}{16^{q-1}}(\J)=-B^q(q,\J')B^q(q,\J)\enspace.
\end{split}
\]

Finally, the case $t=q$ and $\J=\J'$. We have to show that $B^q(q,\J)$ is an almost complex structure. We have
\[
\begin{split}
B^q(q,\J)B^q(q,\J)&=\coniugiototale{q}\block{16}{16^{q-1}}(\J)\coniugiototale{q}\block{16}{16^{q-1}}(\J)\\
&\shortstack{{\tiny\ref{lemmaquattro}\ref{lemmadiagblock}\eqref{diagblockhom}}\\=}\coniugiototale{q}\coniugiototale{q}\block{16}{16^{q-1}}(\J^2)\shortstack{{\tiny\ref{lemmacinque}}\\=}-\Idarg{16^q}\enspace.
\end{split}
\]
\end{proof}

\begin{re}
From the proof, it appears that for any $t=1,\dots,q-1$, the level $t$ conjugation $\coniugio{q}{t}$ makes level $q$ vector fields $B(q,\J_\alpha)$ orthogonal to level $t$ vector fields $B(t,\J_\beta)$.
\hfill\qed
\end{re}

\begin{re}\label{re:overload}
Overloading the symbols $\coniugiototale{t}$ and $\coniugioraw$, the linear operators used in Table~\ref{table:generalcase} in the Introduction can now be defined by
\[
\begin{split}
\coniugiototale{t}:\Mat{16}&\longrightarrow\Mat{(2k+1)2^p16^q}\\
A&\longmapsto\diag{16^t}{(2k+1)2^p16^{q-t}}(\coniugiototale{t}\block{16}{16^{t-1}}(A))
\end{split}
\]
and by
\[
\begin{split}
\coniugioraw:\Mat{2^p}&\longrightarrow\Mat{(2k+1)2^p16^q}\\
A&\longmapsto\diag{2^p16^q}{2k+1}(\diag{16^q}{2^p}(\coniugiototale{q})\block{2^p}{16^q}(A))\enspace.
\end{split}
\]
\hfill\qed
\end{re}

\subsection*{The case $\mathbf{S^{2^p16^q}}$, for $\mathbf{p=1,2,3}$}

When $p=1,2$ or $3$, we have $1,3$ or $7$ additional vector fields, given essentially by the complex, quaternionic or octonionic multiplication $\LC,\LH$ or $\LO$ respectively. Thus, to state the Theorem we give the following definition.

\begin{de}
\[
\mathcal{G}^1\ug\{\LC_i\}\subset\Mat{2}\enspace,\qquad\mathcal{G}^2\ug\{\LH_i,\LH_j,\LH_k\}\subset\Mat{4}\enspace,\qquad\mathcal{G}^3\ug\{\LO_i,\LO_j,\LO_k,\LO_e,\LO_f,\LO_g,\LO_h\}\subset\Mat{8}\enspace.
\]
\end{de}


\begin{te}\label{teo:mainpq}
For any $q\ge 1$ and $p=1,2$ or $3$, the $8q+2^p-1$ vector fields on $S^{2^p16^q-1}$ given by
\[
\begin{split}
\{B^{p,q}(t,\J_\alpha)&\ug\diag{16^q}{2^p}(B^q(t,\J_\alpha))\}_{\substack{t=1,\dots,q\\ \alpha=1,\dots,8}}\\
\{\LF^{p,q}(G)&\ug\diag{16^q}{2^p}(\coniugiototale{q}\coniugiobase{q})\block{2^p}{16^q}(G)\}_{G\in\mathcal{G}^p}
\end{split}
\]
are a maximal orthonormal system.
\end{te}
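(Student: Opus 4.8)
The framework of Section~\ref{sec:generalcase} reduces everything to checking that each listed matrix is skew-symmetric and squares to $-\Idarg{2^p16^q}$, and that any two distinct ones anticommute; maximality is then automatic, since the count $8q+2^p-1$ equals the Hurwitz--Radon bound $\sigma(2^p16^q)$ (for $k=0$). I would split the verification into the three blocks (i) $B^{p,q}$ against $B^{p,q}$, (ii) $\LF^{p,q}$ against $\LF^{p,q}$, and (iii) $B^{p,q}$ against $\LF^{p,q}$. Block (i) is free: the $B^{p,q}(t,\J_\alpha)=\diag{16^q}{2^p}(B^q(t,\J_\alpha))$ are the images of the family of Theorem~\ref{teo:mainq} under the algebra homomorphism $\diag{16^q}{2^p}$ (Lemma~\ref{lemmadiagblock}\eqref{diagblockhom}), which sends $-\Idarg{16^q}$ to $-\Idarg{2^p16^q}$ and preserves sums and products, hence carries an orthonormal system to an orthonormal system (skew-symmetry is preserved as well).

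For block (ii), write $P=\diag{16^q}{2^p}(\coniugiototale{q}\coniugiobase{q})$ and $Q(G)=\block{2^p}{16^q}(G)$, so that $\LF^{p,q}(G)=PQ(G)$. The matrix $\coniugiototale{q}\coniugiobase{q}$ is a product of diagonal $\pm1$ matrices, hence diagonal with $\pm1$ entries and squaring to $\Idarg{16^q}$; thus $P^2=\Idarg{2^p16^q}$. By Lemma~\ref{lemmadiagblock}\eqref{diagblockim} (with the roles of $16^q$ and $2^p$ swapped, as the remark there requires) $P$ commutes with every $Q(G)$. Consequently $\LF^{p,q}(G)^2=P^2Q(G)^2=-\Idarg{2^p16^q}$, using $Q(G)^2=\block{2^p}{16^q}(G^2)=-\Idarg{2^p16^q}$, and for $G\neq G'$ the two $P$-factors cancel, leaving $\LF^{p,q}(G)\LF^{p,q}(G')+\LF^{p,q}(G')\LF^{p,q}(G)=\block{2^p}{16^q}(GG'+G'G)=0$, because the imaginary units spanning $\mathcal{G}^p$ anticommute. (Skew-symmetry of $\LF^{p,q}(G)$ follows since $P$ is symmetric, $Q(G)$ skew, and they commute.)

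For block (iii), $B^{p,q}(t,\J_\alpha)=\diag{16^q}{2^p}(\cdots)$ again commutes with $Q(G)=\block{2^p}{16^q}(G)$ by Lemma~\ref{lemmadiagblock}\eqref{diagblockim}, so the cross anticommutator collapses:
\[
B^{p,q}(t,\J_\alpha)\LF^{p,q}(G)+\LF^{p,q}(G)B^{p,q}(t,\J_\alpha)=\diag{16^q}{2^p}\bigl(B^q(t,\J_\alpha)\coniugiototale{q}\coniugiobase{q}+\coniugiototale{q}\coniugiobase{q}B^q(t,\J_\alpha)\bigr)\,Q(G).
\]
Thus the whole theorem reduces to the single identity that $B^q(t,\J_\alpha)$ anticommutes with $\coniugiototale{q}\coniugiobase{q}$ in $\Mat{16^q}$.

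This last identity is the heart of the matter and the step I expect to be the real obstacle — it is exactly the ``twisting'' of the conjugation observed in Section~\ref{sec:511}. I would prove it by commuting $B^q(t,\J_\alpha)$ across the factorization $\coniugiototale{q}\coniugiobase{q}=\coniugio{q}{1}\cdots\coniugio{q}{q-1}\coniugiobase{q}$ one factor at a time. The commutation lemmas already invoked in Theorem~\ref{teo:mainq} give that $B^q(t,\J_\alpha)$ commutes with $\coniugio{q}{s}$ for $s\neq t$ and anticommutes with $\coniugio{q}{t}$; the genuinely new input is its interaction with $\coniugiobase{q}=\block{2}{16^q/2}(\coniugioraw)$. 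Here the off-diagonal $8+8$ block form of the $\J_\alpha$ in~\eqref{eq:J2} is decisive: it forces $B^q(q,\J_\alpha)=\coniugiototale{q}\block{16}{16^{q-1}}(\J_\alpha)$ to anticommute with $\coniugiobase{q}$, whereas for $t<q$ the block-diagonal $B^q(t,\J_\alpha)$ commutes with $\coniugiobase{q}$ (which acts as a scalar $\pm\Idarg{16^t}$ on each $16^t$-block). In every case exactly one factor of the product contributes a sign, yielding the required anticommutation. Together, (i)--(iii) produce $8q+2^p-1=\sigma(2^p16^q)$ pairwise orthonormal vector fields, and maximality follows from the Hurwitz--Radon--Adams bound.
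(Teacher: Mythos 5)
Your proposal is correct and takes essentially the same route as the paper: your blocks (i) and (ii) are the paper's first two observations, and your block (iii) reduction plus the factor-by-factor sign count is precisely the paper's Lemma~\ref{lem:lemmapq} (for $t<q$ the single sign comes from $\coniugio{q}{t}$ inside $\coniugiototale{q}$ while $\coniugiobase{q}$ commutes; for $t=q$ only $\coniugiobase{q}$ anticommutes). The only cosmetic difference is that the paper packages the $t<q$ case via Corollary~\ref{co:coniugiototale} and obtains the $t=q$ commutations from Lemma~\ref{lemmaquattro} (since Lemma~\ref{lemmadue} is stated only for $s,t\le q-1$), exactly the small repair your sketch would need when made fully precise.
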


\begin{proof}
The orthonormality for $\{\LF^{p,q}(G)\}_{G\in\mathcal{G}^p}$ is a direct consequence of Lemma~\ref{lemmadiagblock}\eqref{diagblockim}\eqref{diagblockhom}, the orthonormality of $\mathcal{G}^p$, and the fact that $\diag{16^q}{2^p}(\coniugiototale{q}\coniugiobase{q})\diag{16^q}{2^p}(\coniugiototale{q}\coniugiobase{q})=\Idarg{2^p16^q}$.

The orthonormality for $\{B^{p,q}(t,\J_\alpha)\}_{\substack{t=1,\dots,q\\ \alpha=1,\dots,8}}$ follows from Theorem~\ref{teo:mainq}.

We are then left to show that $B^{p,q}(t,\J_\alpha)\LF^{p,q}(G)+\LF^{p,q}(G)B^{p,q}(t,\J_\alpha)=0$, for $t=1,\dots,q$ and $\alpha=1,\dots,8$. But
\[
\begin{split}
B^{p,q}(t,\J_\alpha)\LF^{p,q}(G)&=\diag{16^q}{2^p}(B^q(t,\J_\alpha))\diag{16^q}{2^p}(\coniugiototale{q}\coniugiobase{q})\block{2^p}{16^q}(G)\\
&\shortstack{{\tiny\ref{lemmadiagblock}\eqref{diagblockhom}}\\=}\diag{16^q}{2^p}(B^q(t,\J_\alpha)\coniugiototale{q}\coniugiobase{q})\block{2^p}{16^q}(G)\\
&\shortstack{{\tiny\ref{lem:lemmapq}}\\=}-\diag{16^q}{2^p}(\coniugiototale{q}\coniugiobase{q}B^q(t,\J_\alpha))\block{2^p}{16^q}(G)\\
&\shortstack{{\tiny\ref{lemmadiagblock}\eqref{diagblockhom}}\\=}-\diag{16^q}{2^p}(\coniugiototale{q}\coniugiobase{q})\diag{16^q}{2^p}(B^q(t,\J_\alpha))\block{2^p}{16^q}(G)\\
&\shortstack{{\tiny\ref{lemmadiagblock}\eqref{diagblockim}}\\=}-\diag{16^q}{2^p}(\coniugiototale{q}\coniugiobase{q})\block{2^p}{16^q}(G)\diag{16^q}{2^p}(B^q(t,\J_\alpha))=-\LF^{p,q}(G)B^{p,q}(t,\J_\alpha)\enspace.
\end{split}
\]
\end{proof}

\subsection*{The general case: $\mathbf{S^{m-1}}$ for any even $\mathbf{m}$}

Defining $\mathcal{G}^0\ug\emptyset$, we can state the general case $S^{m-1}$, $m=(2k+1)2^p16^q$, in one single Theorem:
\begin{te}\label{teo:mainkpq}
For any $k\ge 0$, $q\ge 1$ and $p=0,1,2$ or $3$, the $8q+2^p-1$ vector fields on $S^{(2k+1)2^p16^q-1}$ given by
\[
\begin{split}
\{B^{k,p,q}(t,\J_\alpha)&\ug\diag{16^t}{(2k+1)2^p16^{q-t}}(\coniugiototale{t}\block{16}{16^{t-1}}(\J_\alpha))\}_{\substack{t=1,\dots,q\\ \alpha=1,\dots,8}}\\
\{\LF^{k,p,q}(G)&\ug\diag{2^p16^q}{2k+1}(\diag{16^q}{2^p}(\coniugiototale{q})\block{2^p}{16^q}(G))\}_{G\in\mathcal{G}^p}
\end{split}
\]
are a maximal orthonormal set.
\end{te}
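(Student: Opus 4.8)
The plan is to reduce the statement to the cases $k=0$ already settled in Theorem~\ref{teo:mainq} (for $p=0$) and Theorem~\ref{teo:mainpq} (for $p=1,2,3$), by recognizing every field in the list as the \emph{diagonal extension} of a field on the smaller sphere $S^{2^p16^q-1}$. Writing $M\ug 2^p16^q$, the passage from $S^{M-1}$ to $S^{(2k+1)M-1}$ is governed by the single operator $\diag{M}{2k+1}$, which is precisely the blockwise replication introduced in~\eqref{eq:blockwise}. So first I would rewrite the whole system as the image under $\diag{M}{2k+1}$ of the orthonormal system furnished by Theorem~\ref{teo:mainpq}.

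The engine of the argument is that $\diag{M}{2k+1}$ is an algebra homomorphism, by Lemma~\ref{lemmadiagblock}\eqref{diagblockhom}, and sends $\Idarg{M}$ to $\Idarg{(2k+1)M}$. It therefore preserves the two defining relations of an orthonormal family of linear vector fields: from $AB+BA=0$ it yields $\diag{M}{2k+1}(A)\diag{M}{2k+1}(B)+\diag{M}{2k+1}(B)\diag{M}{2k+1}(A)=0$, and from $A^2=-\Idarg{M}$ it yields $\diag{M}{2k+1}(A)^2=-\Idarg{(2k+1)M}$. Hence $\diag{M}{2k+1}$ automatically carries any orthonormal system on $S^{M-1}$ to an orthonormal system on $S^{(2k+1)M-1}$, and no new anticommutation computation is required.

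Next I would verify that the listed fields are exactly these images. For the $B$-fields this is the composition law $\diag{16^t}{(2k+1)2^p16^{q-t}}=\diag{M}{2k+1}\circ\diag{16^t}{2^p16^{q-t}}$, an instance of Lemma~\ref{lemmadiagblock}\eqref{diagblockdiag}, giving $B^{k,p,q}(t,\J_\alpha)=\diag{M}{2k+1}(B^{p,q}(t,\J_\alpha))$; for the $\LF$-fields the same composition law applied to the outermost $\diagtext$ identifies $\LF^{k,p,q}(G)$ with $\diag{M}{2k+1}(\LF^{p,q}(G))$, once the nested conjugation factors are matched. Orthonormality of the complete collection then follows verbatim from Theorem~\ref{teo:mainpq} (and from Theorem~\ref{teo:mainq} when $p=0$, where the $\LF$-fields are absent since $\mathcal{G}^0=\emptyset$).

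For maximality I would only count: there are $8q$ fields $B^{k,p,q}(t,\J_\alpha)$ and $\lvert\mathcal{G}^p\rvert=2^p-1$ fields $\LF^{k,p,q}(G)$, so the total is $8q+2^p-1=\sigma(m)$ with $m=(2k+1)2^p16^q$. As $\sigma(m)$ is the Adams maximal number of linearly independent vector fields on $S^{m-1}$, an orthonormal system of that size is necessarily maximal. The only point demanding genuine care is the bookkeeping of the nested $\diagtext$, $\blocktext$ and $\coniugiototale{q}$ operators: the hard part is lining up the indices in Lemma~\ref{lemmadiagblock}\eqref{diagblockdiag} so that the claimed formulas coincide with the diagonal extensions, and in particular matching the conjugation $\coniugiototale{q}$ appearing in $\LF^{k,p,q}(G)$ with the one built into $\LF^{p,q}(G)$; once this alignment is confirmed, the theorem is immediate.
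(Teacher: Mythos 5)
Your proposal is correct and takes essentially the same route as the paper, whose entire proof of this theorem is the one-line remark that it ``follows from Theorems~\ref{teo:mainq}, \ref{teo:mainpq} and property~\eqref{diagblockhom} of $\diagtext$ in Lemma~\ref{lemmadiagblock}'' --- precisely the reduction via the algebra homomorphism $\diag{2^p16^q}{2k+1}$ that you spell out, with Lemma~\ref{lemmadiagblock}\eqref{diagblockdiag} handling the index bookkeeping and Adams' theorem giving maximality by counting. Your closing caveat about matching the nested conjugation factors is well placed: as printed, $\LF^{k,p,q}(G)$ carries only $\coniugiototale{q}$ whereas $\LF^{p,q}(G)$ in Theorem~\ref{teo:mainpq} carries $\coniugiototale{q}\coniugiobase{q}$, and the identification $\LF^{k,p,q}(G)=\diag{2^p16^q}{2k+1}(\LF^{p,q}(G))$ (indeed, anticommutation with the $B$-fields at all, already for $k=0$, $q=1$) holds only with the factor $\coniugiobase{q}$ restored --- a typo in the paper's statement, not a flaw in your argument.
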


\begin{proof}
Follows from Theorems~\ref{teo:mainq}, \ref{teo:mainpq} and property \eqref{diagblockhom} of $\diagtext$ in Lemma~\ref{lemmadiagblock}.
\end{proof}

\subsection*{Lemmas}

In this last subsection, we collect all the lemmas appearing as references in the equalities to prove Theorems~\ref{teo:mainq}, \ref{teo:mainpq} and~\ref{teo:mainkpq}.
\begin{lm}\label{lemmauno}
Let $q\ge 2$ and $1\le t\le q-1$. Then $B^q(t,J)$ and $\block{16}{16^{q-1}}(J')$ commute.
\end{lm}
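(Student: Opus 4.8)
The plan is to recognize the two matrices as an instance of the $\diagtext$ and the $\blocktext$ operators \emph{with the roles of the two sizes interchanged}, so that the commutation statement Lemma~\ref{lemmadiagblock}\eqref{diagblockim} applies verbatim. That lemma is tailor-made for exactly this situation, since (as already remarked) commutation holds precisely when the inner and outer sizes are swapped between the two factors.

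First I would record that, directly from its definition,
\[
B^q(t,J)=\diag{16^t}{16^{q-t}}(C)\enspace,\qquad C\ug\coniugiototale{t}\block{16}{16^{t-1}}(J)\in\Mat{16^t}\enspace,
\]
so that $B^q(t,J)$ is of the form $\diag{m}{n}(\,\cdot\,)$ with $m=16^t$ and $n=16^{q-t}$. The only real work is then to rewrite $\block{16}{16^{q-1}}(J')$ in the shape $\block{n}{m}(\,\cdot\,)=\block{16^{q-t}}{16^t}(\,\cdot\,)$ demanded by Lemma~\ref{lemmadiagblock}\eqref{diagblockim}.

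For this I would use the ``associativity'' of the block extension, namely
\[
\block{16}{16^{q-1}}(J')=\block{16^{q-t}}{16^t}\bigl(\block{16}{16^{q-t-1}}(J')\bigr)\enspace,
\]
which is the $\blocktext$-analogue of Lemma~\ref{lemmadiagblock}\eqref{diagblockdiag} and follows at once from the definition of $\blocktext$: since $16^{q-1}=16^{q-t-1}\cdot 16^t$, replacing each entry of $J'$ by a multiple of $\Idarg{16^{q-1}}$ is the same as first replacing it by a multiple of $\Idarg{16^{q-t-1}}$ and then replacing every entry of the resulting $16^{q-t}\times 16^{q-t}$ matrix by a multiple of $\Idarg{16^t}$. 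Here the hypothesis $t\le q-1$ is exactly what guarantees that the exponent $q-t-1$ is nonnegative, so that $\block{16}{16^{q-t-1}}(J')$ makes sense (when $t=q-1$ it is simply $J'$).

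Setting $B'\ug\block{16}{16^{q-t-1}}(J')\in\Mat{16^{q-t}}$, I then have $B^q(t,J)=\diag{16^t}{16^{q-t}}(C)$ and $\block{16}{16^{q-1}}(J')=\block{16^{q-t}}{16^t}(B')$, i.e.\ a $\diag{m}{n}$ and a $\block{n}{m}$ with the same $m=16^t$, $n=16^{q-t}$; Lemma~\ref{lemmadiagblock}\eqref{diagblockim} immediately yields $B^q(t,J)\block{16}{16^{q-1}}(J')=\block{16}{16^{q-1}}(J')B^q(t,J)$, which is the claim. I expect the only delicate point to be the bookkeeping of exponents in the block-associativity identity; conceptually it is transparent if one thinks of the decomposition $\RR^{16^q}=\RR^{16}\otimes\RR^{16^{q-t-1}}\otimes\RR^{16^t}$, under which $\block{16}{16^{q-1}}(J')$ acts as $J'$ on the first factor while $B^q(t,J)$ acts as $C$ on the third, whence the two operators act on disjoint tensor factors and therefore commute.
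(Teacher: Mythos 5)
Your proof is correct and takes essentially the same route as the paper: both reduce the claim to Lemma~\ref{lemmadiagblock}\eqref{diagblockim} by rewriting one of the two factors so that the inner and outer block sizes are swapped. The only difference is which factor is rewritten: the paper puts $B^q(t,J)\in\II(\diag{16^{q-1}}{16})$ via the stated associativity \eqref{diagblockdiag} and applies \eqref{diagblockim} with $m=16^{q-1}$, $n=16$, while you keep $B^q(t,J)=\diag{16^t}{16^{q-t}}(C)$ and instead rewrite $\block{16}{16^{q-1}}(J')=\block{16^{q-t}}{16^t}\bigl(\block{16}{16^{q-t-1}}(J')\bigr)$ using the $\blocktext$-analogue of \eqref{diagblockdiag}, which the paper does not state but which you correctly justify from the definition.
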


\begin{proof}
Follows from the fact that $B^q(t,J)\in\II(\diag{16^{q-1}}{16})$ (Lemma~\ref{lemmadiagblock}\eqref{diagblockdiag}) and Lemma~\ref{lemmadiagblock}(3).
\end{proof}

\begin{lm}\label{lemmaquattro}
Let $q\ge 2$ and $1\le t\le q-1$. Then $\coniugio{q}{t}$ and $\block{16}{16^{q-1}}(J)$ commute.
\end{lm}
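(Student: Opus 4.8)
The plan is to reduce the statement to part~\eqref{diagblockim} of Lemma~\ref{lemmadiagblock}, which guarantees that a diagonal extension $\diag{m}{n}(\cdot)$ and a block extension $\block{n}{m}(\cdot)$ commute \emph{provided the two size parameters appear swapped}. The block factor is already in the required shape, namely $\block{16}{16^{q-1}}(J)=\block{n}{m}(J)$ with $n=16$ and $m=16^{q-1}$, so the entire task is to exhibit $\coniugio{q}{t}$ as a diagonal extension $\diag{16^{q-1}}{16}(D)$ for a suitable $D\in\Mat{16^{q-1}}$, that is, with outer multiplicity $16$ and inner blocks of size $16^{q-1}$. Note that nothing special about $J$ is used, consistently with the fact that the lemma is stated for an arbitrary $16\times 16$ matrix.

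First I would unfold the definition $\coniugio{q}{t}=\diag{16^t}{16^{q-t}}(\coniugiobase{t})$ and regroup the diagonal extension through the intermediate size $16^{q-1}$ by means of Lemma~\ref{lemmadiagblock}\eqref{diagblockdiag}. Since $t\le q-1$ we have $16^t\le 16^{q-1}$, and $16^t\cdot 16^{q-t}=16^q=16^{q-1}\cdot 16$, so the hypotheses of that part hold with $m=16^t$, $n=16^{q-t}$, $m'=16^{q-1}$, $n'=16$. This gives
\[
\coniugio{q}{t}=\diag{16^{q-1}}{16}\bigl(\diag{16^t}{16^{q-1-t}}(\coniugiobase{t})\bigr)\enspace,
\]
so that $\coniugio{q}{t}=\diag{16^{q-1}}{16}(D)$ with $D\ug\diag{16^t}{16^{q-1-t}}(\coniugiobase{t})\in\Mat{16^{q-1}}$. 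Conceptually this is just the observation that the diagonal sign pattern of $\coniugio{q}{t}$ has period $16^t$, which divides the outer block size $16^{q-1}$, so it repeats identically inside each of the $16$ outer blocks.

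With both operators now written in matching form, Lemma~\ref{lemmadiagblock}\eqref{diagblockim} applies directly with $m=16^{q-1}$ and $n=16$, yielding $\coniugio{q}{t}\,\block{16}{16^{q-1}}(J)=\block{16}{16^{q-1}}(J)\,\coniugio{q}{t}$, which is the claim. I expect the only delicate point to be the bookkeeping of the size indices, precisely the swap highlighted in the remark following Lemma~\ref{lemmadiagblock}: one must check that the outer multiplicity of the diagonal extension ($16$) matches the size of the matrix $J$ being block-expanded, and that the inner block size ($16^{q-1}$) matches the identity factor in $\block{16}{16^{q-1}}$. Beyond invoking parts~\eqref{diagblockdiag} and~\eqref{diagblockim} of Lemma~\ref{lemmadiagblock}, no genuine computation is required.
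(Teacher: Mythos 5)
Your proof is correct and takes essentially the same route as the paper: the paper's (very terse) proof likewise observes that $\coniugio{q}{t}$ lies in the image of $\diag{16^{q-1}}{16}$ by Lemma~\ref{lemmadiagblock}\eqref{diagblockdiag} and then concludes by Lemma~\ref{lemmadiagblock}\eqref{diagblockim}. Your write-up simply makes explicit the regrouping $\coniugio{q}{t}=\diag{16^{q-1}}{16}\bigl(\diag{16^t}{16^{q-1-t}}(\coniugiobase{t})\bigr)$ and the size bookkeeping that the paper leaves implicit.
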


\begin{proof}
Follows from the fact that $\coniugio{q}{t}\in\II(\diag{16^{q-1}}{16})$ (Lemma~\ref{lemmadiagblock}\eqref{diagblockdiag}) and Lemma~\ref{lemmadiagblock}\eqref{diagblockim}.
\end{proof}

\begin{lm}\label{lemmadue}
Let $q\ge 2$, $1\le t,s\le q-1$ and $s\neq t$. Then $\coniugio{q}{s}$ and $B^q(t,\J)$ commute.
\end{lm}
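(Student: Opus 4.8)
The plan is to establish $\coniugio{q}{s}B^q(t,\J)=B^q(t,\J)\coniugio{q}{s}$ by repackaging both operators as diagonal extensions sharing a common inner block size, so that the homomorphism property Lemma~\ref{lemmadiagblock}\eqref{diagblockhom} reduces the claim to a commutation inside a single copy of that inner block. Since $s\neq t$ while $1\le s,t\le q-1$, I would split into the two cases $s<t$ and $s>t$, handling them asymmetrically exactly as in Lemmas~\ref{lemmauno} and~\ref{lemmaquattro}: for $s<t$ one lifts the conjugation to the larger size, for $s>t$ one lifts the vector field.

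In the case $s<t$, I would refactor the outer extension of the conjugation up to block size $16^t$. By Lemma~\ref{lemmadiagblock}\eqref{diagblockdiag} (with $m=16^s\le m'=16^t$, $n=16^{q-s}$, $n'=16^{q-t}$) one gets $\coniugio{q}{s}=\diag{16^s}{16^{q-s}}(\coniugiobase{s})=\diag{16^t}{16^{q-t}}(\coniugio{t}{s})$, while by definition $B^q(t,\J)=\diag{16^t}{16^{q-t}}(\coniugiototale{t}\block{16}{16^{t-1}}(\J))$. By Lemma~\ref{lemmadiagblock}\eqref{diagblockhom} it then suffices to show that $\coniugio{t}{s}$ commutes with $\coniugiototale{t}\block{16}{16^{t-1}}(\J)$ inside $\Mat{16^t}$. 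Now $\coniugio{t}{s}$ commutes with $\block{16}{16^{t-1}}(\J)$ by Lemma~\ref{lemmaquattro} (applied with its $q$ equal to $t$ and its $t$ equal to $s$, which is legitimate since $t\ge 2$ and $1\le s\le t-1$), and it commutes with $\coniugiototale{t}$ because $\coniugio{t}{s}$ and all the factors $\coniugio{t}{r}$ of $\coniugiototale{t}$ are diagonal $\pm1$ matrices. Commuting with both factors, it commutes with their product.

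In the case $s>t$, I would instead refactor the outer extension of the vector field up to block size $16^s$. Again by Lemma~\ref{lemmadiagblock}\eqref{diagblockdiag} one has $B^q(t,\J)=\diag{16^s}{16^{q-s}}(\diag{16^t}{16^{s-t}}(\coniugiototale{t}\block{16}{16^{t-1}}(\J)))$, whereas $\coniugio{q}{s}=\diag{16^s}{16^{q-s}}(\coniugiobase{s})$. By Lemma~\ref{lemmadiagblock}\eqref{diagblockhom} it is then enough to check that $\coniugiobase{s}$ commutes with $\diag{16^t}{16^{s-t}}(\coniugiototale{t}\block{16}{16^{t-1}}(\J))$ in $\Mat{16^s}$. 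Writing $\coniugiobase{s}=\block{2}{16^s/2}(\coniugioraw)$ and observing, via Lemma~\ref{lemmadiagblock}\eqref{diagblockdiag}, that $\diag{16^t}{16^{s-t}}(\cdot)\in\II(\diag{16^s/2}{2})$ (this uses $16^t\le 16^s/2$ and the integrality of $16^{s-t}/2$, both valid since $s>t$), the pair falls exactly under Lemma~\ref{lemmadiagblock}\eqref{diagblockim} with $m=16^s/2$ and $n=2$, hence the two matrices commute.

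I expect the only real care to lie in the bookkeeping of these diagonal-extension reindexings, namely verifying the size inequalities ($16^s\le16^t$ in the first case, $16^t\le16^s/2$ in the second) and the integrality of the quotients required to apply Lemma~\ref{lemmadiagblock}\eqref{diagblockdiag}. No single step is a genuine obstacle: once both operators are rewritten as diagonal extensions of a common block size, the content is entirely the image/commutation mechanism of Lemma~\ref{lemmadiagblock} together with Lemma~\ref{lemmaquattro}, and the mild subtlety is simply remembering that the roles of conjugation and vector field must be swapped between the two cases.
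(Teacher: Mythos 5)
Your proof is correct and follows essentially the same route as the paper's: the case $s<t$ is verbatim the paper's argument (lift $\coniugio{q}{s}$ to $\diag{16^t}{16^{q-t}}(\coniugio{t}{s})$, then use Lemma~\ref{lemmaquattro} and diagonality), and the case $s>t$ rests on the same key mechanism, Lemma~\ref{lemmadiagblock}\eqref{diagblockim}. The only cosmetic difference is in the $s>t$ case: the paper applies Lemma~\ref{lemmadiagblock}\eqref{diagblockcom} to rewrite $\coniugio{q}{s}$ as a block extension at full size $16^q$ and applies \eqref{diagblockim} there, whereas you first strip the common outer factor $\diag{16^s}{16^{q-s}}$ via \eqref{diagblockhom} and apply \eqref{diagblockim} inside $\Mat{16^s}$, which lets you avoid \eqref{diagblockcom} altogether.
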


\begin{proof}
Assume $s>t$. Then the claim follows from Lemma~\ref{lemmadiagblock}\eqref{diagblockim}, writing $\coniugio{q}{s}\shortstack{{\tiny\ref{lemmadiagblock}\eqref{diagblockcom}}\\=}\block{2\cdot 16^{q-s}}{\frac{16^s}{2}}(\diag{2}{16^{q-s}}(C))$ and $B^q(t,\J)\in\II(\diag{\frac{16^s}{2}}{2\cdot 16^{q-s}})$ (Lemma~\ref{lemmadiagblock}\eqref{diagblockdiag}).

If $s<t$, we use Lemma~\ref{lemmadiagblock}\eqref{diagblockdiag} to write $\coniugio{q}{s}=\diag{16^t}{16^{q-t}}(\coniugio{t}{s})$, so that
\[
\begin{split}
\coniugio{q}{s}B^q(t,\J)&\shortstack{{\tiny\ref{lemmadiagblock}\eqref{diagblockhom}}\\=}\diag{16^t}{16^{q-t}}(\coniugio{t}{s}\coniugiototale{t}\block{16}{16^{t-1}}(\J))\\
&=\diag{16^t}{16^{q-t}}(\coniugiototale{t}\coniugio{t}{s}\block{16}{16^{t-1}}(\J))\\
&\shortstack{{\tiny\ref{lemmaquattro}}\\=}\diag{16^t}{16^{q-t}}(\coniugiototale{t}\block{16}{16^{t-1}}(\J)\coniugio{t}{s})\\
&=B^q(t,\J)\coniugio{q}{s}\enspace,
\end{split}
\]
where $\coniugiototale{t}$ and $\coniugio{t}{s}$ commutes since they are both diagonal matrices.
\end{proof}

\begin{lm}\label{lemmatre}
Let $q\ge 2$ and $1\le t\le q$. Then $\coniugio{q}{t}$ and $B^q(t,\J)$ anticommute.
\end{lm}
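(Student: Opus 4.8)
The plan is to push the claimed identity down to the two inner $16^t$-dimensional factors and there exploit the block anti-diagonal shape of the $\J_\alpha$. By definition $\coniugio{q}{t}=\diag{16^t}{16^{q-t}}(\coniugiobase{t})$ and $B^q(t,\J)=\diag{16^t}{16^{q-t}}(\coniugiototale{t}\block{16}{16^{t-1}}(\J))$, so both operators are diagonal extensions of matrices living in $\Mat{16^t}$. Since $\diag{16^t}{16^{q-t}}$ is an algebra homomorphism (Lemma~\ref{lemmadiagblock}\eqref{diagblockhom}), it sends an anticommuting pair $A,B$ to an anticommuting pair, because $\diag{16^t}{16^{q-t}}(A)\diag{16^t}{16^{q-t}}(B)+\diag{16^t}{16^{q-t}}(B)\diag{16^t}{16^{q-t}}(A)=\diag{16^t}{16^{q-t}}(AB+BA)=0$. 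Hence it is enough to prove, inside $\Mat{16^t}$, that $\coniugiobase{t}$ anticommutes with $\coniugiototale{t}\block{16}{16^{t-1}}(\J)$.

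The first step toward this is to slide $\coniugiobase{t}$ past $\coniugiototale{t}$. Both $\coniugiobase{t}$ and each factor $\coniugio{t}{s}$ of $\coniugiototale{t}=\prod_{s=1}^{t-1}\coniugio{t}{s}$ are diagonal sign matrices, so they all commute and $\coniugiobase{t}\coniugiototale{t}=\coniugiototale{t}\coniugiobase{t}$. Consequently, once I know that $\coniugiobase{t}$ anticommutes with $\block{16}{16^{t-1}}(\J)$, the desired anticommutation follows from
\[
\coniugiobase{t}\,\coniugiototale{t}\block{16}{16^{t-1}}(\J)=\coniugiototale{t}\,\coniugiobase{t}\block{16}{16^{t-1}}(\J)=-\coniugiototale{t}\block{16}{16^{t-1}}(\J)\,\coniugiobase{t}\enspace.
\]

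The heart of the proof, and the step demanding the most care, is the anticommutation of $\coniugiobase{t}$ with $\block{16}{16^{t-1}}(\J)$. I will use that every $\J_\alpha$ in~\eqref{eq:J2} is block anti-diagonal for the splitting $\RR^{16}=\RR^8\oplus\RR^8$, i.e.\ of the form $\left(\begin{smallmatrix}0&P\\Q&0\end{smallmatrix}\right)$ with $P,Q\in\Mat{8}$. Since $\block{16}{16^{t-1}}$ replaces each entry $\J_{\alpha\beta}$ by $\J_{\alpha\beta}\Idarg{16^{t-1}}$ while preserving the order of the $16$ rows and columns, the first eight indices expand to the first $8\cdot 16^{t-1}=\tfrac{16^t}{2}$ coordinates and the last eight to the last $\tfrac{16^t}{2}$; thus $\block{16}{16^{t-1}}(\J)$ is again block anti-diagonal, now for $\RR^{16^t}=\RR^{16^t/2}\oplus\RR^{16^t/2}$. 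On the other hand $\coniugiobase{t}=\block{2}{16^t/2}(\coniugioraw)=\left(\begin{smallmatrix}\Idarg{16^t/2}&0\\0&-\Idarg{16^t/2}\end{smallmatrix}\right)$ is exactly the sign involution $\mathrm{diag}(\Id,-\Id)$ of this same splitting, and such an involution anticommutes with every block anti-diagonal matrix by the elementary $2\times2$-block computation. The only genuinely delicate point is this index bookkeeping: one must check that the half of the coordinates on which $\coniugiobase{t}$ acts by $-1$ is precisely the block-expansion of the lower eight coordinates of $\RR^{16}$, so that the two splittings coincide. Once they are matched the anticommutation is immediate, and the lemma follows.
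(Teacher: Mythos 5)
Your proof is correct and follows essentially the same route as the paper's: reduce via the homomorphism property of $\diag{16^t}{16^{q-t}}$ to matrices in $\Mat{16^t}$, commute $\coniugiobase{t}$ past $\coniugiototale{t}$ since both are diagonal sign matrices, and then obtain the anticommutation of $\coniugiobase{t}$ with $\block{16}{16^{t-1}}(\J)$ from the block anti-diagonal form of the $\J_\alpha$ in~\eqref{eq:J2}. Your explicit check that the block extension matches the two halves of the splitting $\RR^{16^t}=\RR^{16^t/2}\oplus\RR^{16^t/2}$ is exactly the ``block multiplication by hand'' the paper leaves implicit.
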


\begin{proof}
\[
\begin{split}
\coniugio{q}{t}B^q(t,\J)&\shortstack{{\tiny\ref{lemmadiagblock}\eqref{diagblockhom}}\\=}\diag{16^t}{16^{q-t}}(\coniugiobase{t}\coniugiototale{t}\block{16}{16^{t-1}}(\J))\\
&=\diag{16^t}{16^{q-t}}(\coniugiototale{t}\coniugiobase{t}\block{16}{16^{t-1}}(\J))
\end{split}
\]
where $\coniugiototale{t}$ and $\coniugiobase{t}$ commutes since they are both diagonal matrices. We are thus reduced to show that $\coniugiobase{t}$ and $\block{16}{16^{t-1}}(\J)$ anticommutes. But using the explicit expressions given in Formulas~\ref{eq:J2} for the $\J$s, we can write $\block{16}{16^{t-1}}(\J)$ as
\[
\block{16}{16^{t-1}}(\J)=
\begin{cases}
\begin{pmatrix}
0 & \block{8}{16^{t-1}}(\RO) \\
\block{8}{16^{t-1}}(\RO) & 0
\end{pmatrix}
&
\text{if }\J\in\{\J_2,\dots,\J_8\}\enspace,\\
\begin{pmatrix}
0 & -\Idarg{\frac{16^t}{2}} \\
\Idarg{\frac{16^t}{2}} & 0
\end{pmatrix}
&
\text{if }\J=\J_1\enspace,
\end{cases}
\]
and in both cases a block multiplication by hand shows that $\coniugiobase{t}\block{16}{16^{t-1}}(\J)=-\block{16}{16^{t-1}}(\J)\coniugiobase{t}$.
\end{proof}

\begin{co}\label{co:coniugiototale}
Let $q\ge 2$ and $1\le t\le q-1$. Then $\coniugiototale{q}$ and $B^q(t,\J)$ anticommute.
\end{co}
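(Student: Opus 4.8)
The plan is to reduce the statement to the two immediately preceding lemmas by expanding $\coniugiototale{q}$ into its defining product and transporting $B^q(t,\J)$ across one factor at a time. First I would recall from Equation~\eqref{eq:coniugiototale} that, since $q\ge 2$,
\[
\coniugiototale{q}=\prod_{s=1}^{q-1}\coniugio{q}{s}\enspace,
\]
so that $\coniugiototale{q}$ is a product of $q-1$ factors indexed by $s=1,\dots,q-1$. The hypothesis $1\le t\le q-1$ guarantees that the index $t$ occurs among these factors, and it occurs exactly once.

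Next I would move $B^q(t,\J)$ from the right-hand side of this product to the left, passing it across the factors $\coniugio{q}{s}$ in succession. By Lemma~\ref{lemmadue}, every factor with $s\neq t$ commutes with $B^q(t,\J)$, so these passages introduce no sign. By Lemma~\ref{lemmatre}, the single factor $\coniugio{q}{t}$ anticommutes with $B^q(t,\J)$, contributing exactly one factor of $-1$. Since only $B^q(t,\J)$ is transported, while the relative order of the $\coniugio{q}{s}$ is left untouched, collecting the accumulated signs yields
\[
\coniugiototale{q}B^q(t,\J)=-B^q(t,\J)\coniugiototale{q}\enspace,
\]
which is the assertion.

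There is no genuine obstacle here; the only point demanding (minor) care is the sign count, namely checking that precisely one of the $q-1$ transported factors anticommutes while the remaining $q-2$ commute. This is exactly where the hypothesis $t\le q-1$ enters: it ensures that the anticommuting factor $\coniugio{q}{t}$ actually appears in the product defining $\coniugiototale{q}$, so that a single sign — and not zero or two — is produced.
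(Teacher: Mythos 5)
Your proof is correct and follows essentially the same route as the paper, whose own proof simply cites the definition~\eqref{eq:coniugiototale} of $\coniugiototale{q}$ together with Lemmas~\ref{lemmadue} and~\ref{lemmatre}; you have merely spelled out the sign count that the paper leaves implicit. No changes needed.
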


\begin{proof}
Follows from the Definition~\eqref{eq:coniugiototale} of $\coniugiototale{q}$ and Lemmas~\ref{lemmadue},~\ref{lemmatre}.
\end{proof}

\begin{lm}\label{lemmacinque}
Let $q\ge 2$ and $1\le t\le q-1$. Then $\coniugio{q}{t}\coniugio{q}{t}=\Idarg{16^q}$.
\end{lm}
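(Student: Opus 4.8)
The plan is to reduce the whole statement to the single elementary fact that the $2\times 2$ matrix $\coniugioraw$ is an involution, and then propagate this fact upward through the two block operators using only their homomorphism properties. Concretely, unwinding the definitions, $\coniugio{q}{t}=\diag{16^t}{16^{q-t}}(\coniugiobase{t})$ with $\coniugiobase{t}=\block{2}{\frac{16^t}{2}}(\coniugioraw)$, so $\coniugio{q}{t}$ is obtained from $\coniugioraw$ by first applying $\blocktext$ and then $\diagtext$. Since both operators are algebra homomorphisms (Lemma~\ref{lemmadiagblock}\eqref{diagblockhom}), squaring commutes with them, and it suffices to square at the bottom.

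First I would observe, directly from the definition, that
\[
\coniugioraw^2=
\begin{pmatrix}
1 & 0 \\
0 & -1
\end{pmatrix}^2
=\Idarg{2}\enspace.
\]
Next, applying the homomorphism $\block{2}{\frac{16^t}{2}}$, which in particular sends the identity to the identity, gives
\[
\coniugiobase{t}^2=\block{2}{\frac{16^t}{2}}(\coniugioraw)^2\shortstack{{\tiny\ref{lemmadiagblock}\eqref{diagblockhom}}\\=}\block{2}{\frac{16^t}{2}}(\coniugioraw^2)=\block{2}{\frac{16^t}{2}}(\Idarg{2})=\Idarg{16^t}\enspace.
\]
Finally, applying the homomorphism $\diag{16^t}{16^{q-t}}$ in the same way yields
\[
\coniugio{q}{t}\coniugio{q}{t}=\diag{16^t}{16^{q-t}}(\coniugiobase{t})^2\shortstack{{\tiny\ref{lemmadiagblock}\eqref{diagblockhom}}\\=}\diag{16^t}{16^{q-t}}(\coniugiobase{t}^2)=\diag{16^t}{16^{q-t}}(\Idarg{16^t})=\Idarg{16^q}\enspace,
\]
which is the claim.

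I do not expect any genuine obstacle here: the statement is a purely formal consequence of $\coniugioraw^2=\Idarg{2}$ together with the functoriality already recorded in Lemma~\ref{lemmadiagblock}. The only point worth a line of justification is that $\blocktext$ and $\diagtext$ carry identity matrices to identity matrices, which is immediate from their definitions (the diagonal blocks are exactly $\coniugioraw$-images, and $\block{m}{n}(\Idarg{m})=\Idarg{mn}$, $\diag{m}{n}(\Idarg{m})=\Idarg{mn}$). This is why I would phrase the argument as two successive applications of the homomorphism property rather than a direct block computation, keeping it independent of the hypothesis range $1\le t\le q-1$ beyond what is needed to make $\coniugio{q}{t}$ well defined.
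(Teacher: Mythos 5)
Your proof is correct and follows exactly the paper's approach: the paper's own proof is the one-line observation that the claim ``follows from Lemma~\ref{lemmadiagblock}\eqref{diagblockhom} and from $\coniugioraw^2=\Idarg{2}$,'' which is precisely your argument written out in full (including the minor point, left implicit in the paper, that $\diagtext$ and $\blocktext$ carry identities to identities).
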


\begin{proof}
Follows from Lemma~\ref{lemmadiagblock}\eqref{diagblockhom} and from $C^2=\Idarg{2}$.
\end{proof}

\begin{lm}\label{lem:lemmapq}
Let $q\ge 1$ and $1\le t\le q$. Then $B^q(t,\J)$ and $\coniugiototale{q}\coniugiobase{q}$ anticommute.
\end{lm}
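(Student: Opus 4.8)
The plan is to separate the two diagonal factors of $\coniugiototale{q}\coniugiobase{q}$ and track how each interacts with $B^q(t,\J)$, splitting into the cases $1\le t\le q-1$ and $t=q$ (the case $q=1$ being subsumed into the latter, since then $\coniugiototale{1}=\Idarg{16}$). The guiding observation is that $\coniugiototale{q}$ and $\coniugiobase{q}$ are both diagonal sign matrices, so they commute with one another and each squares to the identity; this lets me reshuffle them freely and reduce everything to the anticommutation facts already established for the individual building blocks.

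First I would treat $1\le t\le q-1$ (so $q\ge 2$). Here I would use that $\coniugiototale{q}$ anticommutes with $B^q(t,\J)$ — this is exactly Corollary~\ref{co:coniugiototale} — while $\coniugiobase{q}$ commutes with $B^q(t,\J)$. For the commutation I would write $\coniugiobase{q}=\block{2}{\frac{16^q}{2}}(\coniugioraw)$ and check that $B^q(t,\J)$ lies in $\II(\diag{\frac{16^q}{2}}{2})$: since $16^t\le 16^{q-1}\le\frac{16^q}{2}$ for $t\le q-1$, Lemma~\ref{lemmadiagblock}\eqref{diagblockdiag} factors $\diag{16^t}{16^{q-t}}$ through $\diag{\frac{16^q}{2}}{2}$, and then Lemma~\ref{lemmadiagblock}\eqref{diagblockim} (with the two block sizes exchanged, as the remark after that lemma warns) yields the commutation. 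Anticommutation of the first factor together with commutation of the second then shows that $\coniugiototale{q}\coniugiobase{q}$ anticommutes with $B^q(t,\J)$.

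Next I would treat $t=q$, where $B^q(q,\J)=\coniugiototale{q}\block{16}{16^{q-1}}(\J)$. The three ingredients are: $\block{16}{16^{q-1}}(\J)$ commutes with each $\coniugio{q}{s}$, hence with $\coniugiototale{q}$ (Lemma~\ref{lemmaquattro}); $\coniugiobase{q}$ anticommutes with $\block{16}{16^{q-1}}(\J)$ (this is the reduced claim established inside the proof of Lemma~\ref{lemmatre}, valid for $t=q$ as well); and $\coniugiototale{q}^2=\Idarg{16^q}$, because the pairwise commuting involutions $\coniugio{q}{s}$ each square to the identity by Lemma~\ref{lemmacinque}. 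Using these, I would compute $(\coniugiototale{q}\coniugiobase{q})B^q(q,\J)$ by commuting $\coniugiototale{q}$ (diagonal) through $\coniugiobase{q}$ and collapsing $\coniugiototale{q}^2$ to the identity, which leaves $\coniugiobase{q}\block{16}{16^{q-1}}(\J)=-\block{16}{16^{q-1}}(\J)\coniugiobase{q}$; the mirror computation for $B^q(q,\J)(\coniugiototale{q}\coniugiobase{q})$ pushes $\block{16}{16^{q-1}}(\J)$ past $\coniugiototale{q}$ and again collapses $\coniugiototale{q}^2$, leaving the same product with a plus sign. Comparing the two gives the desired anticommutation, and $q=1$ is the trivial instance with $\coniugiototale{1}=\Idarg{16}$.

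I expect the only genuinely delicate point to be the bookkeeping in the case $1\le t\le q-1$: one must orient Lemma~\ref{lemmadiagblock}\eqref{diagblockim} correctly (its commutation requires the $\diagtext$ and $\blocktext$ block sizes to be swapped) and verify the size inequality $16^t\le\frac{16^q}{2}$ that legitimizes factoring $\diag{16^t}{16^{q-t}}$ through $\diag{\frac{16^q}{2}}{2}$. Everything else reduces to commuting diagonal sign matrices and invoking the anticommutation lemmas already in place.
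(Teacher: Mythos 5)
Your proof is correct and takes essentially the same route as the paper's: the identical case split, with $1\le t\le q-1$ handled by Corollary~\ref{co:coniugiototale} plus the commutation of $B^q(t,\J)$ with $\coniugiobase{q}$ via $B^q(t,\J)\in\II(\diag{\frac{16^q}{2}}{2})$ and Lemma~\ref{lemmadiagblock}\eqref{diagblockim}, and $t=q$ handled by Lemma~\ref{lemmaquattro} together with the anticommutation of $\coniugiobase{q}$ with $\block{16}{16^{q-1}}(\J)$. The only cosmetic difference is that at $t=q$ you unpack Lemma~\ref{lemmatre} (collapsing $\coniugiototale{q}^2$ to the identity) rather than citing it as a black box.
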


\begin{proof}
If $q=1$, then $B^q(t,\J)=\J$ and $\coniugiototale{q}\coniugiobase{q}=\coniugiobase{1}$ anticommute, as in proof of Lemma~\ref{lemmatre} with $t=1$.

If $q\ge 2$, then split the proof in cases $1\le t\le q-1$ and $t=q$.

If $q\ge 2$ and $1\le t\le q-1$, use Corollary~\ref{co:coniugiototale} to show that $B^q(t,\J)$ and $\coniugiototale{q}$ anticommute. Then, write $B^q(t,\J)\in\II(\diag{\frac{16^q}{2}}{2})$ and use Lemma~\ref{lemmadiagblock}\eqref{diagblockim} to show that $B^q(t,\J)$ and $\coniugiobase{q}$ commute.

If $q\ge 2$ and $t=q$, use Lemma~\ref{lemmaquattro} to show that $B^q(q,\J)$ and $\coniugiototale{q}$ commute, and finally apply Lemma~\ref{lemmatre} with $t=q$ to show that $B^q(q,\J)$ and $\coniugiobase{q}$ anticommute.
\end{proof}

\begin{re}
It would be interesting, but we were not able, to compare the maximal systems of vector fields constructed in the present paper with the ones appearing in previous constructions, in particular with the vector fields obtained in \cite{og}. 
\end{re}

\end{document}